\newtheorem{theorem}{Theorem} 
\newtheorem{lemma}{Lemma} 
\newtheorem{definition}{Definition} 
\newtheorem{prop}{Proposition}
\begin{document} 

\title{The Calabi Conjecture}
\author{Rohit Jain and Jason Jo}

\begin{abstract}

In this paper we aim to explore the Geometric aspects of the Calabi Conjecture
and highlight the techniques of nonlinear Elliptic PDE theory used by S.T. Yau
[SY] in obtaining a solution to the problem. Yau proves the existence of a
Geometric
structure using differential equations, giving importance to the idea that deep
insights into geometry can be obtained by studying solutions of such equations.
Yau's proof of the existence of a specific class of metrics have found a natural
interpretation in recent developments in Theoretical Physics most notably in the
formulation of String Theory. We will also attempt to explore the importance of
a special case of Yau's solution known as Calabi-Yau Manifolds in the context of
holonomy. 

\end{abstract}

\maketitle

\tableofcontents

\section{K\"{a}hler Geometry}
There are many mathematical reasons to study K\"{a}hler geometry. However, we
wish to give a physical motivation to study K\"{a}hler manifolds. In string
theory, the universe is conjectured to be 10 dimensional (9 space dimensions, 1
time dimension) with 6 of the space dimensions compactified into a Planck
scale manifold, let it be denoted as $\mathcal{M}$. During the early days of
string theory, it was believed that $\mathcal{M}$ was a flat hypertorus, i.e.
$\mathcal{M} = S^1 \times S^1 \times S^1 \times S^1 \times S^1 \times S^1$.  But
the hypertorus based string theory could not incorporate chiral
aspects of the Standard Model. Thus, in order for String theory to be a unified
field theory, there must be a more sophisticated choice for $\mathcal{M}$. It
turns out that the correct choice for $\mathcal{M}$ are so called Calabi Yau
manifolds [POL] which are examples of K\"{a}hler manifolds.
K\"{a}hler geometry also naturally occurs in the context of supersymmetry and
$\sigma$-models. So in addition to its inherent mathematical interest,
K\"{a}hler
geometry is also of fundamental importance to high energy physics research. For
the remainder of this section, we will give an introduction to some of the
K\"{a}hler geometry concepts necessary for the Calabi Conjecture.

\begin{definition} A complex manifold of complex dimension $n$ (hence real
dimension $2n$) is a smooth manifold equipped with an atlas of charts in which
all the transition functions are holomorphic functions.
\end{definition}

Complex manifolds come equipped with a \textit{complex structure}, denoted by
$J$. In particular, we have the following definition:
\begin{definition} An almost complex manifold $M$ is a (real) smooth manifold
with a globally defined $(1,1)$ tensor $J$ which is an endomorphism of the
tangent bundle $TM$ such that:
\begin{equation} J^2 = - \mathbb{I},
\end{equation}
$J$ is called an almost complex structure.
\end{definition}

Now let us consider the situation locally. For any given point $p \in M$, we
have an endomorphism $J_p: T_p M \rightarrow T_p M$ which satisfies $J_p^2 = -
\mathbb{I}_p$ depending smoothly on $p$, where $\mathbb{I}_p$ is the identity
operator on the tangent space $T_p M$. Given a local coordinate system $x^\mu,
\mu = 1, \dots, \dim(M)$, then we locally have the representation:
\begin{equation} J_p = J_\mu{}^\nu(p) \frac{\partial}{\partial x^\nu} \otimes
dx^\mu.
\end{equation}
It should be noted that $J_\mu{}^\nu(p)$ is a real function in the real basis.
Hence, for any vector field $X = X^\mu \frac{\partial}{\partial x^\mu}$, our
endomorphism $J$ acts on $X$ according to:
\begin{align} J(X) &= (X^\mu J_\mu{}^\nu)\frac{\partial}{\partial x^\nu},
\\ \Rightarrow J^2(X) &= X^\sigma J_\sigma{}^\nu J_\nu{}^\mu
\frac{\partial}{\partial x^\mu}.
\end{align}

Thus, in local coordinates the conditions for an almost complex structure is
equivalent to the following matrix equation:
\begin{equation} J_\sigma{}^\nu(p) J_\nu{}^\mu(p) = -\delta_\sigma{}^\mu.
\end{equation}
Globally, the existence of an almost complex structure $J$ on a smooth manifold
means that we can define $J_p$ in any patch and glue them together without any
obstructions or singularities.

Observe that given a complex manifold $M$, we may view it as a complex manifold
of complex dimension $n$ or we may view it as the underlying \textit{real}
manifold of dimension $2n$. If we have complex coordinates $z^j$, for $j = 1,
\dots, n$, we have the corresponding real coordinates $x^j, y^j$ for $j = 1,
\dots, n$, where we use the identification $z^j = x^j+iy^j$. Thus we get a set
of $2n$ coordinates $z^j, z^{\overline{j}}$, where $z^{\overline{j}} \equiv x^j
- i
y^j$ for $a = 1, \dots, n$. We make the following definitions:
\begin{align} \frac{\partial}{\partial z^j} &\equiv \frac{1}{2}
\left(\frac{\partial}{\partial x^j} - i  \frac{\partial}{\partial y^j}\right),
\\ \frac{\partial}{\partial z^{\overline{j}}} &\equiv \frac{1}{2}
\left(\frac{\partial}{\partial x^j} +i  \frac{\partial}{\partial y^j} \right),
\\ dz_j &\equiv dx_j + i dy_j
\\ dz_{\overline{j}} &\equiv dx_j - i dy_j.
\end{align}

We then have the following differential operators:
\begin{align} \partial &= \frac{1}{2} \left(  \frac{\partial}{\partial x^j} -
i \frac{\partial}{\partial y^j}  \right) (dx^j + i dy^j),
\\ \overline{\partial} &=  \frac{1}{2} \left(  \frac{\partial}{\partial x^j} +
i \frac{\partial}{\partial y^j}  \right) (dx^j - i dy^j).
\end{align}

With these definitions in hand, we have the following proposition on our
$\partial, \overline{\partial}$ differential operators:
\begin{prop} The exterior derivative $d$ satisfies:
\begin{equation} d = \partial + \overline{\partial}.
\end{equation}
Furthermore,
\begin{align} \partial \partial &= 0, \overline{\partial}  \overline{\partial}
= 0,
\\ \partial \overline{\partial}  &= - \overline{\partial}  \partial.
\end{align}
\end{prop}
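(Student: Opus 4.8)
The plan is to reduce everything to a local coordinate computation on functions and then bootstrap to arbitrary forms via the graded Leibniz rule. First I would fix a holomorphic chart with coordinates $z^j = x^j + i y^j$ and write the ordinary (real) exterior derivative of a smooth function $f$ in the real coframe, $df = \sum_j (\partial_{x^j} f)\,dx^j + (\partial_{y^j} f)\,dy^j$. Substituting the defining relations $dx^j = \tfrac12(dz^j + dz^{\overline{j}})$, $dy^j = \tfrac1{2i}(dz^j - dz^{\overline{j}})$ together with $\partial_{x^j} = \partial_{z^j} + \partial_{z^{\overline{j}}}$ and $\partial_{y^j} = i(\partial_{z^j} - \partial_{z^{\overline{j}}})$, and then collecting the coefficients of $dz^j$ and of $dz^{\overline{j}}$ separately, yields $df = \sum_j (\partial_{z^j} f)\,dz^j + \sum_j (\partial_{z^{\overline{j}}} f)\,dz^{\overline{j}}$, which is exactly $\partial f + \overline{\partial} f$ in the notation introduced above. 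Because the transition functions between holomorphic charts are holomorphic, the splitting of $1$-forms into a $dz$-part and a $dz^{\overline{\,\cdot\,}}$-part — and more generally the bidegree decomposition $\Omega^k = \bigoplus_{p+q=k}\Omega^{p,q}$ — is chart-independent, so $\partial$ and $\overline{\partial}$ are well-defined global first-order operators, $\partial$ raising the holomorphic degree by one and $\overline{\partial}$ the antiholomorphic degree by one.

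Next I would extend $d = \partial + \overline{\partial}$ from functions to a general form. Writing a $(p,q)$-form locally as a sum of terms $f\,dz^{j_1}\wedge\cdots\wedge dz^{j_p}\wedge dz^{\overline{k_1}}\wedge\cdots\wedge dz^{\overline{k_q}}$, I would apply the Leibniz rule for $d$, observe that $d(dz^j) = d(dz^{\overline{j}}) = 0$ (these forms are already exact), and substitute the formula $df = \partial f + \overline{\partial} f$ just established. Separating the resulting expression by bidegree identifies the $(p+1,q)$-component with $\partial$ applied to the form and the $(p,q+1)$-component with $\overline{\partial}$ applied to it, and there are no other components, so $d = \partial + \overline{\partial}$ on all of $\Omega^k$.

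The remaining three identities then follow purely formally from $d^2 = 0$. Expanding,
\[
0 = d^2 = (\partial + \overline{\partial})^2 = \partial^2 + (\partial\overline{\partial} + \overline{\partial}\,\partial) + \overline{\partial}^2 .
\]
Applied to a $(p,q)$-form, $\partial^2$ has bidegree $(p+2,q)$, the cross term $\partial\overline{\partial} + \overline{\partial}\,\partial$ has bidegree $(p+1,q+1)$, and $\overline{\partial}^2$ has bidegree $(p,q+2)$; since $\Omega^{p+2,q}$, $\Omega^{p+1,q+1}$, and $\Omega^{p,q+2}$ are linearly independent subspaces of $\Omega^{p+q+2}$, each of the three summands must vanish separately. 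This gives $\partial\partial = 0$, $\overline{\partial}\,\overline{\partial} = 0$, and $\partial\overline{\partial} = -\overline{\partial}\,\partial$.

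I expect the only genuinely delicate point to be the global well-definedness of the bidegree decomposition — that the class of $dz$-forms and the class of $dz^{\overline{\,\cdot\,}}$-forms are each preserved under a change of holomorphic coordinates — which is precisely where holomorphicity of the transition functions (rather than merely having an almost complex structure) is used; on a non-integrable almost complex manifold $d$ would in general also carry components of bidegree $(p+2,q-1)$ and $(p-1,q+2)$ and the proposition would fail. Everything else is routine bookkeeping with the Leibniz rule and collecting terms by type.
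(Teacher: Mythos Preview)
Your proposal is correct and follows essentially the same route as the paper: a direct local-coordinate computation to identify $d$ with $\partial+\overline{\partial}$, followed by expanding $d^2=0$ and separating the result by bidegree. The paper's version is terser --- it simply adds the defining expressions for $\partial$ and $\overline{\partial}$ in the real coframe to recover $\partial_{x^j}dx^j+\partial_{y^j}dy^j=d$, and then invokes the type decomposition in one line --- whereas you work in the other direction and spell out the extension from functions to higher-degree forms and the global well-definedness of the $(p,q)$-splitting; but the underlying argument is the same.
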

\begin{proof} We have the following:
\begin{align} \partial + \overline{\partial} &= \frac{1}{2} \left(
\frac{\partial}{\partial x^j} - i \frac{\partial}{\partial y^j} \right) (dx^j +
i dy^j) + \frac{1}{2}\left(   \frac{\partial}{\partial x^j} + i
\frac{\partial}{\partial y^j} \right)(dx^j - i dy^j)
\\&= \frac{\partial}{\partial x^j}dx^j + \frac{\partial}{\partial y^j}dy^j = d.
\end{align}

Therefore,
\begin{equation} 0 = d^2 = (\partial + \overline{\partial}) (\partial +
\overline{\partial})  = \partial^2 + \partial \overline{\partial} +
\overline{\partial} \partial + \overline{\partial}^2
\end{equation}

Decomposing the above equation into types yields that each piece $\partial^2,
{\overline{\partial}}^2, \partial \overline{\partial} + \overline{\partial}
\partial$ vanishes. This proves the theorem.
\end{proof}

With these $\partial, \overline{\partial}$ differential operators, we may
decompose $\Omega^k$ the space of $k$-forms into subspaces $\mathcal{A}^{p,q}$ where
$p + q = k$. Namely, $\mathcal{A}^{p,q}$ is locally spanned by forms of the type
\begin{equation} \omega(z) = \eta(z) dz^{j_1} \wedge \dots \wedge dz^{j_p}
\wedge d\overline{z}^{k_1} \wedge \dots \wedge d\overline{z}^{k_q}
\end{equation}

The almost complex structure $J$ allows us to complexify our real $2n$
dimensional tangent space, let $(T_p M)^\mathbb{C}$ denote the complexified
tangent space. We reinterpret $J$ as a complex linear map still squaring to
$-\mathbb{I}$ at each point $p \in M$. Due to the fact that it squares to
$-\mathbb{I}$, the eigenvalues of $J_p$ can only be $\pm i$ and this allows us
to decompose our tangent space $(T_p M)^\mathbb{C}$:
\begin{equation} (T_p M)^\mathbb{C} = T_p M^+ \oplus T_p M^-,
\end{equation}
where $T_p M^+$ is the eigenspace of $i$ and $T_p M^-$ is the eigenspace of
$-i$.
We note that the complex structure $J$ is an endomorphism of the tangent space
given in local coordinates by the following two conditions:
\begin{align} J(\partial / \partial x_j) &= \partial / \partial y_j
\\ J(\partial / \partial y_j) &= - \partial / \partial x_j,
\end{align}
so clearly $J^2 = - \mathbb{I}$.

A natural question to now ask is what is the relationship between complex and
almost complex manifold?
\begin{theorem} Any complex manifold $M$ is also an almost complex manifold.
\end{theorem}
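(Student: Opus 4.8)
The plan is to construct the almost complex structure $J$ chart by chart, using exactly the local formulas displayed just above the theorem, and then to verify that these local pieces agree on overlaps so that they glue to a single globally defined $(1,1)$ tensor. Concretely, on a holomorphic chart with complex coordinates $z^j = x^j + i y^j$, $j = 1,\dots,n$, I would \emph{define} $J$ by $J(\partial/\partial x^j) = \partial/\partial y^j$ and $J(\partial/\partial y^j) = -\partial/\partial x^j$, extended $C^\infty(M)$-linearly; this is manifestly a smooth $(1,1)$ tensor on the chart. A one-line computation gives $J^2(\partial/\partial x^j) = J(\partial/\partial y^j) = -\partial/\partial x^j$ and likewise $J^2(\partial/\partial y^j) = -\partial/\partial y^j$, so $J^2 = -\mathbb{I}$ on each chart.

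The substance of the argument is independence of the choice of holomorphic chart. Let $(z^j)$ and $(w^k)$ be two overlapping holomorphic coordinate systems with $w^k = w^k(z)$ holomorphic, and write $w^k = u^k + i v^k$, $z^j = x^j + i y^j$ in real coordinates. I must show that the tensor defined via $(z^j)$ equals the one defined via $(w^k)$ on the overlap; equivalently, that the real Jacobian of the transition map intertwines the two model operators, i.e. that $d(w\circ z^{-1})$ is complex linear. This is precisely the content of the Cauchy--Riemann equations: holomorphicity of $w^k(z)$, that is $\partial w^k/\partial \overline{z}^j = 0$, unpacks to $\partial u^k/\partial x^j = \partial v^k/\partial y^j$ and $\partial u^k/\partial y^j = -\partial v^k/\partial x^j$, and these are exactly the identities needed so that expanding $\partial/\partial x^j$ and $\partial/\partial y^j$ in the $u^k, v^k$ basis by the chain rule, applying the model $J$ in $w$-coordinates, and re-expressing the result in the $z$-coordinate basis reproduces the model $J$ in the $z$-coordinates. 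I would carry this out by that chain-rule expansion, substituting the Cauchy--Riemann relations at the end to collapse the two expressions.

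Once compatibility on overlaps is established, the local tensors patch to a single globally defined smooth $(1,1)$ tensor $J$ on $TM$, and since $J^2 = -\mathbb{I}$ holds in every chart it holds globally; hence $(M, J)$ meets the definition of an almost complex manifold. I expect the only genuine obstacle to be bookkeeping in the overlap computation --- keeping the chain-rule expansion organized and invoking the Cauchy--Riemann equations at the right step --- since everything else is formal. It is worth remarking, without proof here, that the converse is false in general: the $J$ produced this way is \emph{integrable}, and by the Newlander--Nirenberg theorem integrability is equivalent to the vanishing of the Nijenhuis tensor, so not every almost complex structure comes from a complex manifold.
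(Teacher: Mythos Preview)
Your proof is correct and follows essentially the same approach as the paper: define $J$ locally from the holomorphic coordinates and then verify chart-independence on overlaps using the holomorphicity of the transition functions. The only difference is presentational---you work in the real basis $\partial/\partial x^j,\ \partial/\partial y^j$ and invoke the Cauchy--Riemann equations explicitly, whereas the paper writes $J = i\,\partial/\partial z^j \otimes dz^j - i\,\partial/\partial \overline{z}^j \otimes d\overline{z}^j$ in the complexified basis, which makes the overlap check a one-line application of the complex chain rule $\partial/\partial z^j \otimes dz^j = \partial/\partial w^k \otimes dw^k$.
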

\begin{proof} Recall that a complex manifold admits a holomorphic atlas, giving
us a complex coordinate system $z^a$ in a neighborhood $U$ of an arbitrary point
$p \in M$. Thus, we may define the tensor:
\begin{equation} J = i \frac{\partial}{\partial z^j} \otimes dz^j - i
\frac{\partial}{\partial \overline{z}^j} \otimes d\overline{z}^j.
\end{equation}
It is important to note that the above equation is well defined in a patch $U$
as opposed to only a point (which would be the case for an almost complex
manifold) since we may define complex coordinates that vary holomorphically in
the patch $U$ for complex manifolds, but we cannot do this for almost complex
manifolds. Thus, we have an almost complex structure defined in any given patch.
Next, we need to check if it is well defined in the overlaps of patches $(U, z)
\cap (V, w)$.

As the coordinate transformation functions $z^j(w)$ are holomorphic, it follows
from basic transformations of vectors and one forms that we have the following:
\begin{equation} \frac{\partial}{\partial z^j} \otimes dz^j = \frac{\partial
z^j}{\partial w^l} \frac{\partial w^k}{\partial z^j} \frac{\partial}{\partial
w^k}\otimes dw^l = \frac{\partial}{\partial w^j}\otimes dw^j.
\end{equation}
Thus we may conclude that in the overlap, $J$ takes the form:
\begin{equation} J = i \frac{\partial}{\partial w^j} \otimes dw^j - i
\frac{\partial}{\partial \overline{w}^j} \otimes d\overline{w}^j,
\end{equation}
which proves the theorem.\end{proof}

\begin{definition} Let $M$ be a complex manifold with Riemannian metric $g$ and
complex structure $J$. If $g$ satisfies:
\begin{equation} g(JX, JY) = g(X,Y) \textrm{ i.e. a compatability condition},
\end{equation}
for any two sections $X,Y$ of the tangent bundle, then $g$ is said to be a
Hermitian
metric and the pair $(M, g)$ is called a Hermitian manifold.
\end{definition}

\begin{prop} A complex manifold $(M, J)$ always admits a Hermitian metric.
\end{prop}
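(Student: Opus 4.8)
The plan is to start from an arbitrary Riemannian metric and symmetrize it with respect to the complex structure $J$. First I would invoke the standard fact that any smooth manifold, being paracompact, admits a Riemannian metric $h$: take a locally finite holomorphic atlas, pull back the Euclidean metric in each chart, and patch the pieces together using a partition of unity subordinate to the cover. By the previous theorem, the complex manifold $M$ also carries an almost complex structure $J$ with $J^2 = -\mathbb{I}$, so we may use $J$ freely in what follows.

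Next I would define a new metric by averaging $h$ over the action generated by $J$, namely
\[
g(X,Y) := h(X,Y) + h(JX, JY),
\]
for all sections $X,Y$ of $TM$, and then check the three required properties. Symmetry of $g$ is immediate from symmetry of $h$. Positive-definiteness follows since $g(X,X) = h(X,X) + h(JX,JX) \geq 0$, with equality forcing $h(X,X)=0$ and hence $X=0$ already from the first term. Compatibility is the key identity: using $J^2 = -\mathbb{I}$,
\[
g(JX, JY) = h(JX, JY) + h(J^2 X, J^2 Y) = h(JX, JY) + h(X,Y) = g(X,Y),
\]
so $g(JX,JY) = g(X,Y)$ and $(M,g)$ is a Hermitian manifold.

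There is no deep obstacle here. The only non-formal ingredient is the existence of some background Riemannian metric, which rests on paracompactness and partitions of unity; everything after that is the short linear-algebra verification above. The one point that deserves a line of care is confirming that the symmetrization does not introduce degeneracy — that adding the nonnegative term $h(JX,JX)$ cannot make $g$ degenerate — which is exactly the observation recorded in the positive-definiteness check. As a cosmetic variant one may instead set $g(X,Y) = \tfrac12\bigl(h(X,Y)+h(JX,JY)\bigr)$, so that $g=h$ whenever $h$ was already Hermitian, but this normalization is inessential.
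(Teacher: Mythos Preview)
Your proof is correct and essentially identical to the paper's: both start from an arbitrary Riemannian metric obtained via partitions of unity and then average with respect to $J$ to produce a $J$-compatible metric, verifying compatibility via $J^2=-\mathbb{I}$. The only differences are cosmetic---you omit the $\tfrac12$ factor (which you yourself flag as inessential) and you additionally spell out symmetry and positive-definiteness, which the paper leaves implicit.
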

\begin{proof} From rudimentary Riemannian geometry, we know that any manifold
admits a
Riemannian metric $g$ (we always locally have one, and we patch these together
via a partition of unity). To obtain a Hermitian metric, simply define:
\begin{equation} h(X,Y) \equiv \frac{1}{2} ( g(X,Y) + g(JX, JY)).
\end{equation}
Observe that
\begin{align}h(JX, JY) &= \frac{1}{2} ( g(JX, JY) + g(J^2 X, J^2 Y))
\\ &= \frac{1}{2} (g(JX, JY) + g(-X,-Y))
\\ &= \frac{1}{2} ( g(JX, JY) + g(X,Y)) = h(X,Y).
\end{align}
Thus $h$ is indeed a Hermitian metric. \end{proof}

\begin{definition}Given the data $(M, g, J)$, where $M$ is a complex manifold,
$g$ is a Riemannian metric and $J$ is a compatible complex structure, we may
associate a 2 form (in particular a real (1,1) form) denoted $\omega$, defined
by:
\begin{equation} \omega(v, w) = g(Jv, w).
\end{equation}
\end{definition}
We are well within our rights to wonder why is $\omega$ a 2 form? Observe that:
\begin{equation} \omega(X,Y) = g(JX,Y) = g(J^2 X, JY) = -g(X, JY) = -g(JY, X) =
- \omega(Y,X).
\end{equation}
So we see that $\omega$ is indeed a antisymmetric 2 tensor, i.e. it is a 2-form.

In local coordinates, we have the following:
\begin{align} g &= \frac{1}{2} \sum_{i,j} h_{ij}(z, \overline{z}) \left( dz_i
\otimes d \overline{z}_j + d\overline{z}_j \otimes dz_i    \right),
\\ \omega &= \frac{i}{2} \sum_{i,j} h_{ij}(z, \overline{z})  dz_i \wedge
d\overline{z}_j,
\end{align}
where $ h_{ij}$ is positive definite and Hermitian matrix.

\begin{definition} Let $(M, g, J)$ be a Hermitian manifold with associated 2
form $\omega$. If $\omega$ is closed, i.e. $d \omega = 0$, then $M$ is called a
K\"{a}hler manifold, $g$ is the K\"{a}hler metric and $\omega$ the K\"{a}hler
form.
\end{definition}

It goes without saying that there are some marvelous properties of K\"{a}hler
manifolds. We will now quote one of the nicer properties of K\"{a}hler manifolds:
\begin{prop} (The Global $\partial \overline{\partial}$ Lemma): If $M$ is a compact, K\"{a}hler manifold and $\alpha \in \mathcal{A}^{p,q}(M)$ and $d\alpha = 0$, then the following are equivalent:
\begin{itemize}
\item $\alpha$ is $d$-exact
\item $\alpha$ is $\partial$-exact
\item $\alpha$ is $\overline{\partial}$-exact
\item $\alpha$ is $\partial \overline{\partial}$-exact.
\end{itemize}
\end{prop}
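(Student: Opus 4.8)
The plan is to establish a chain of implications connecting the four statements, using Hodge theory on the compact Kähler manifold $M$ as the central tool. The implications ``$\partial\overline{\partial}$-exact $\Rightarrow$ $\partial$-exact'' and ``$\partial\overline{\partial}$-exact $\Rightarrow$ $\overline{\partial}$-exact'' are immediate (if $\alpha = \partial\overline{\partial}\beta$ then $\alpha = \partial(\overline{\partial}\beta) = \overline{\partial}(-\partial\beta)$), and ``$\partial$-exact $\Rightarrow$ $d$-exact'' together with ``$\overline{\partial}$-exact $\Rightarrow$ $d$-exact'' are nearly immediate once one knows that a $\partial$-exact (resp. $\overline{\partial}$-exact) form of pure type which is also $d$-closed is in fact $d$-exact; this uses the type decomposition of $d = \partial + \overline{\partial}$. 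So the entire content is in the reverse direction: assuming $\alpha$ is $d$-exact (and $d$-closed, automatically), deduce that $\alpha = \partial\overline{\partial}\beta$ for some $\beta$.

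First I would recall the Kähler identities, namely that the Laplacians satisfy $\Delta_d = 2\Delta_\partial = 2\Delta_{\overline{\partial}}$ on a compact Kähler manifold, and in particular that all three Laplacians commute with the projections onto $\mathcal{A}^{p,q}$. This is the one genuinely Kähler input — it fails on a general compact complex manifold — and I would either cite it or sketch its derivation from the identity $[\Lambda,\overline{\partial}] = -i\partial^*$ relating the Lefschetz operator to the adjoints. From this it follows that the $d$-harmonic forms, the $\partial$-harmonic forms, and the $\overline{\partial}$-harmonic forms all coincide, and that a $d$-harmonic form of type $(p,q)$ is simultaneously $\partial$-closed, $\partial^*$-closed, $\overline{\partial}$-closed and $\overline{\partial}^*$-closed.

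Next I would run the following argument. Since $\alpha$ is $d$-exact it is orthogonal to the space of $d$-harmonic forms, so by the Hodge decomposition with respect to $\overline{\partial}$ we may write $\alpha = \overline{\partial}\,\overline{\partial}^*\gamma + \overline{\partial}^*\overline{\partial}\gamma$ for some $(p,q)$-form $\gamma$ (the harmonic part being absent because the $\overline{\partial}$-harmonic and $d$-harmonic spaces agree). Using $d\alpha = 0$, i.e. $\partial\alpha = 0$ and $\overline{\partial}\alpha = 0$ by type, one shows the second term $\overline{\partial}^*\overline{\partial}\gamma$ vanishes: pair it with itself and use $\overline{\partial}\alpha = 0$ to conclude $\|\overline{\partial}\gamma\|$-type quantities drop out, so $\alpha = \overline{\partial}\psi$ with $\psi = \overline{\partial}^*\gamma$. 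Now I apply the same circle of ideas to $\psi$: decompose $\psi$ via the $\partial$-Laplacian, use $\partial\alpha = \partial\overline{\partial}\psi = 0$ together with the Kähler identities (which let me trade $\partial^*$ for $\overline{\partial}$ up to a constant and sign via $\partial^* = i[\overline{\partial},\Lambda]$), and extract that the relevant $\partial$-harmonic and $\partial^*$-closed pieces do not contribute, leaving $\alpha = \overline{\partial}\partial\beta = -\partial\overline{\partial}\beta$ up to sign for some $(p-1,q-1)$-form $\beta$. Absorbing the sign into $\beta$ gives the claim.

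The main obstacle I anticipate is carrying out that last extraction cleanly: one must manipulate $\psi$ so that only a $\partial\overline{\partial}$-term survives, and the bookkeeping with the adjoint Kähler identities and the orthogonal Hodge decomposition is where errors creep in. A slicker route that avoids some of this is the Bott--Chern/Aeppli formalism: define the fourth-order elliptic operator whose kernel computes Bott--Chern cohomology, show via the Kähler identities that it has the same harmonic space as $\Delta_d$, and read off directly that a $d$-closed form is $d$-exact if and only if it is $\partial\overline{\partial}$-exact. I would present the Hodge-decomposition argument as the primary proof since it is the most self-contained given what has been developed, and remark on the Bott--Chern viewpoint as an aside; either way, the compactness of $M$ (for finite-dimensionality of harmonic spaces and the validity of the Hodge decomposition) and the Kähler condition (for the identification of the three Laplacians) are the two hypotheses that do all the work.
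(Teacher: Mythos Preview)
The paper does not prove this proposition; it is stated without proof and then invoked as a black box in the subsequent results (Proposition~4 and the reduction of the Calabi Conjecture to the Monge--Amp\`ere equation). So there is no argument in the paper to compare yours against.

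Your sketch is the standard Hodge-theoretic proof and is correct in outline. A couple of small remarks. First, the implication ``$\partial$-exact $\Rightarrow$ $d$-exact'' is not purely a type-decomposition fact as you suggest; it already needs the identification of harmonic spaces (if $\alpha=\partial\beta$ then $\alpha$ is orthogonal to the $\partial$-harmonic forms, hence to the $d$-harmonic forms, hence by $d$-Hodge it is $d$-exact). Second, in the main step the cleanest way to kill the unwanted term is to use the anticommutation $\overline{\partial}\partial^{*}+\partial^{*}\overline{\partial}=0$ (a direct consequence of $[\Lambda,\overline{\partial}]=-i\partial^{*}$): writing $\psi = h + \partial\gamma_{1} + \partial^{*}\gamma_{2}$ one gets $\alpha = \overline{\partial}\psi = -\partial\overline{\partial}\gamma_{1} - \partial^{*}\overline{\partial}\gamma_{2}$, and then $\partial\alpha=0$ forces $\|\partial^{*}\overline{\partial}\gamma_{2}\|=0$. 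This is exactly what you were gesturing at, but making the anticommutation identity explicit removes the vagueness in your ``$\|\overline{\partial}\gamma\|$-type quantities drop out'' line.
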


The Global $\partial \overline{\partial}$ Lemma allows us to characterize
``cohomologous" K\"{a}hler metrics $g$ and $\widetilde{g}$:
\begin{prop} For $M$ a compact, K\"{a}hler manifold and $g, \widetilde{g}$ two
K\"{a}hler metrics and $\omega, \widetilde{\omega}$ their respective K\"{a}hler
forms. Suppose that $\omega, \widetilde{\omega}$ are cohomologous, i.e.
$[\omega] = [\widetilde{\omega}] \in H^2(M, \mathbb{R})$. Then there exists a
smooth, real function $\varphi$ on $M$ such that $\widetilde{\omega} = \omega +
i\partial \overline{\partial} \varphi$. Moreover, $\varphi$ is unique up to a constant.
\end{prop}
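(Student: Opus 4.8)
The plan is to apply the Global $\partial\overline{\partial}$ Lemma to the difference $\widetilde{\omega} - \omega$, and then separately establish the uniqueness statement. I would proceed as follows.

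First, I would observe that $\widetilde{\omega} - \omega$ is a smooth real $2$-form which is closed, since both $\omega$ and $\widetilde{\omega}$ are closed (being K\"{a}hler forms). Moreover, since $[\omega] = [\widetilde{\omega}]$ in $H^2(M,\mathbb{R})$, the difference is $d$-exact: $\widetilde{\omega} - \omega = d\beta$ for some real $1$-form $\beta$. Now I need to get into bidegree $(1,1)$ so that the $\partial\overline{\partial}$-statement applies. Both $\omega$ and $\widetilde{\omega}$ are real $(1,1)$-forms, so $\widetilde{\omega} - \omega \in \mathcal{A}^{1,1}(M)$, is real, and is $d$-closed and $d$-exact. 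Applying the Global $\partial\overline{\partial}$ Lemma with $p = q = 1$ gives a form $\gamma \in \mathcal{A}^{0,0}(M)$ — i.e.\ a smooth complex-valued function — with $\widetilde{\omega} - \omega = \partial\overline{\partial}\gamma$ (up to a sign/constant normalization that I can absorb into the definition of $\gamma$). So we have $\widetilde{\omega} = \omega + \partial\overline{\partial}\gamma$ for some smooth $\mathbb{C}$-valued $\gamma$.

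The next step is to promote $\gamma$ to a \emph{real} function $\varphi$ so that $\widetilde{\omega} = \omega + i\partial\overline{\partial}\varphi$. Here I would use that $\widetilde{\omega} - \omega$ is a real form together with the conjugation identity $\overline{\partial\overline{\partial}\gamma} = \overline{\partial}\partial\overline{\gamma} = -\partial\overline{\partial}\overline{\gamma}$ (using $\partial\overline{\partial} = -\overline{\partial}\partial$ from the Proposition above). Taking complex conjugates of $\widetilde{\omega}-\omega = \partial\overline{\partial}\gamma$ and comparing shows $\partial\overline{\partial}(\gamma + \overline{\gamma}) = 0$, i.e.\ the real part of $\gamma$ is (globally) pluriharmonic; but more usefully, setting $\varphi$ to be an appropriate real multiple of $\mathrm{Im}(\gamma)$ or $-\mathrm{Re}(\gamma)$ — chosen so the factor of $i$ comes out right — yields $i\partial\overline{\partial}\varphi = \partial\overline{\partial}\gamma$. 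Concretely, write $\gamma = u + iv$ with $u,v$ real; then $\partial\overline{\partial}\gamma = \partial\overline{\partial}u + i\partial\overline{\partial}v$ must equal a real $(1,1)$-form, and combined with the above conjugation relation one extracts that $\widetilde{\omega}-\omega = i\partial\overline{\partial}\varphi$ with $\varphi = -u$ (or $\varphi=v$, depending on conventions). This bookkeeping with real and imaginary parts is the one place that requires care, though it is not deep.

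Finally, for uniqueness: suppose $\varphi$ and $\varphi'$ both satisfy $\widetilde{\omega} = \omega + i\partial\overline{\partial}\varphi = \omega + i\partial\overline{\partial}\varphi'$. Then $\psi := \varphi - \varphi'$ is a smooth real function on $M$ with $\partial\overline{\partial}\psi = 0$. I would argue that such a $\psi$ must be constant: $i\partial\overline{\partial}\psi = 0$ means $\psi$ is pluriharmonic, hence harmonic with respect to the K\"{a}hler metric $g$ (on a K\"{a}hler manifold the Laplace operator is, up to a constant, the trace of $i\partial\overline{\partial}$ against $\omega$), and a harmonic function on a \emph{compact} connected manifold is constant by the maximum principle. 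The main obstacle — such as it is — is marshalling the type-decomposition and conjugation identities cleanly to pin down the factor of $i$ and the reality of $\varphi$; everything else (exactness from equal cohomology classes, the appeal to the $\partial\overline{\partial}$-lemma, and the maximum-principle argument for uniqueness) is routine. I should also note the implicit assumption that $M$ is connected, which is needed for the uniqueness claim.
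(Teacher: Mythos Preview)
Your proof is correct and follows the same strategy as the paper for existence: both argue that $\widetilde{\omega} - \omega$ is a $d$-exact real $(1,1)$-form and invoke the Global $\partial\overline{\partial}$ Lemma. You are more explicit than the paper about extracting a \emph{real} potential (the paper simply asserts $i\partial\overline{\partial}$-exactness without comment); for the record, your conjugation computation gives $\partial\overline{\partial}(\gamma+\overline{\gamma})=0$, so with $\gamma=u+iv$ one has $\partial\overline{\partial}u=0$ and hence $\widetilde{\omega}-\omega=\partial\overline{\partial}\gamma=i\partial\overline{\partial}v$. Thus $\varphi=v=\mathrm{Im}(\gamma)$ is the clean choice --- your alternative ``$\varphi=-u$'' does not actually work under this convention, so you should commit to $\varphi=v$.

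Where you genuinely diverge from the paper is in the uniqueness argument. The paper re-applies the Global $\partial\overline{\partial}$ Lemma: from $\partial\overline{\partial}(\varphi_1-\varphi_2)=0$ one deduces that $\overline{\partial}(\varphi_1-\varphi_2)$ is $d$-closed and $\overline{\partial}$-exact, hence $d$-exact, hence zero (a $d$-exact $(0,1)$-form is $df$ with $f$ antiholomorphic, so constant on compact $M$); similarly $\partial(\varphi_1-\varphi_2)=0$, giving $d(\varphi_1-\varphi_2)=0$. You instead observe that $i\partial\overline{\partial}\psi=0$ forces $\Delta_g\psi=0$ (the K\"{a}hler Laplacian being, up to a constant, the trace of $i\partial\overline{\partial}$ against $\omega$) and invoke the maximum principle on a compact manifold. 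Both arguments are valid; yours is more self-contained and avoids a second appeal to the $\partial\overline{\partial}$-machinery, while the paper's keeps everything within the same formalism. Your remark that connectedness of $M$ is implicitly assumed for uniqueness is well taken.
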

\begin{proof} Because we have that $\omega, \widetilde{\omega}$ are
cohomologous, we have that $\omega - \widetilde{\omega}$ is $d$-exact, real
(1,1) form. Thus $\omega - \widetilde{\omega}$ is actually $i\partial
\overline{\partial}$-exact by the \textit{Global $\partial \overline{\partial}$
Lemma}. Hence $\omega - \widetilde{\omega} = i\partial \overline{\partial}
\varphi$.

Suppose we have two solutions $\varphi_1, \varphi_2$. Then $\partial
\overline{\partial} (\varphi_1 - \varphi_2) = 0$. Thus again by \textit{Global
$\partial \overline{\partial}$
Lemma}, we have that $d(\varphi_1 - \varphi_2) = 0$, i.e. $\varphi_1 -
\varphi_2$ is constant, so any two solutions differ by a constant.
\end{proof}

Thus we see that if two K\"{a}hler metrics $g, \widetilde{g}$ have cohomologous K\"{a}hler forms,
then the metrics $g$ and $\widetilde{g}$ are related by:
$$ \widetilde{g}_{j \bar{k}} = g_{j \bar{k}} + \frac{\partial^2}{\partial z^j
\partial \bar{z}^k} 
$$

Another consequence of K\"{a}hler manifolds comes from the form $\omega$ being
closed, $d \omega = 0$: the existence of a \textit{K\"{a}hler potential}. The K\"{a}hler potential condition says that for any point $p \in M$ and any local patch $U \ni p$, there exists a smooth, real valued function $\varphi$ such that locally, we have:
 \begin{equation} g_{i \overline{j}} = \frac{\partial^2 \varphi}{\partial z^i \partial \overline{z}^j}.
 \end{equation}
However, this $\varphi$ is only really defined locally. In general, since $d
\omega = 0$, it defines a cohomology class $[\omega] \in H^2(M, \mathbb{R})$,
let it be denoted as the \textit{K\"{a}hler class}. Since the real dimension of
the underlying real manifold is $2n$, we have that $\omega^n$, the $n$-times
wedge product is equal to $n!$ times $\textrm{Vol}_g$:
\begin{equation} \int_M \omega^n = n! \textrm{Vol}(M) > 0.
\end{equation}
As $\int_M \omega^n$ is an invariant of the cohomology class, we conclude that
$\omega^n \neq 0$. On the other hand as $i\partial \overline{\partial} \varphi$
is an exact form, we have that $[i\partial \overline{\partial} \varphi] = [0]$.
Thus, on a compact K\"{a}hler manifold, \textit{it is impossible to find a
global K\"{a}hler potential.}

We also have a necessary and sufficient condition for a Hermitian manifold $(M,
g, J)$ to be a K\"{a}hler manifold in terms of the Riemannian geometry data,
namely the Levi-Civita connection $\nabla$:
\begin{theorem} A Hermitian manifold $(M, g, J)$ is K\"{a}hler if and only if
the complex structure $J$ satisfies the following parallel transport equation:
\begin{equation} \nabla J = 0,
\end{equation}
where $\nabla$ is the Levi-Civita connection of $g$.
\end{theorem}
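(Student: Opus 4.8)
The plan is to prove the two implications separately, and in both directions to route the argument through the pointwise identity
\begin{equation*}
(\nabla_X \omega)(Y,Z) \;=\; g\bigl((\nabla_X J)Y,\,Z\bigr),
\end{equation*}
which one obtains by expanding $(\nabla_X\omega)(Y,Z)$ via the Leibniz rule applied to $\omega(Y,Z) = g(JY,Z)$ and using that the Levi-Civita connection is metric, $\nabla g = 0$ (the terms involving $\nabla_X Y$ and $\nabla_X Z$ cancel, leaving only the $(\nabla_X J)Y$ term). This identity shows at once that $\nabla J = 0$ if and only if $\nabla\omega = 0$. Since $(M,g,J)$ is K\"{a}hler by definition exactly when $d\omega = 0$, it suffices to prove that for the fundamental form $\omega$ of a Hermitian manifold one has $\nabla\omega = 0 \iff d\omega = 0$.

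One direction is purely formal: because $\nabla$ is torsion-free, the exterior derivative of a $2$-form is the antisymmetrization of its covariant derivative,
\begin{equation*}
d\omega(X,Y,Z) = (\nabla_X\omega)(Y,Z) - (\nabla_Y\omega)(X,Z) + (\nabla_Z\omega)(X,Y),
\end{equation*}
so $\nabla\omega = 0$ immediately forces $d\omega = 0$. Together with the identity above, this settles the implication ``$\nabla J = 0 \Rightarrow (M,g,J)$ is K\"{a}hler''.

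The substantive direction is the converse. Assume $d\omega = 0$; I would argue in a holomorphic coordinate patch $(U; z^1,\dots,z^n)$ --- available precisely because $M$ is genuinely a complex manifold and not merely almost complex --- writing $g_{i\bar j}$ for the components of the Hermitian metric, so that $\omega = \tfrac{i}{2}\, g_{i\bar j}\, dz^i \wedge d\bar z^j$ while $J$ has the \emph{constant} components $J_j{}^k = i\,\delta_j^k$, $J_{\bar j}{}^{\bar k} = -i\,\delta_j^k$, and zero on mixed index pairs. First I would expand $d\omega = \partial\omega + \bar\partial\omega$ and read off that $d\omega = 0$ is equivalent to the two symmetry relations $\partial_k g_{i\bar j} = \partial_i g_{k\bar j}$ and $\partial_{\bar l} g_{i\bar j} = \partial_{\bar j} g_{i\bar l}$. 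Next I would compute the Christoffel symbols of $\nabla$ from
\begin{equation*}
2\,\Gamma^\lambda_{\mu\nu} = g^{\lambda\kappa}\bigl(\partial_\mu g_{\nu\kappa} + \partial_\nu g_{\mu\kappa} - \partial_\kappa g_{\mu\nu}\bigr),
\end{equation*}
using that the only nonvanishing metric components are $g_{i\bar j}$: some symbols of mixed index type (such as $\Gamma^{\bar k}_{ij}$) vanish identically, while the two symmetry relations are exactly what is needed to kill the remaining mixed ones (such as $\Gamma^k_{i\bar j}$), so that only the ``pure'' symbols $\Gamma^k_{ij} = g^{k\bar l}\partial_i g_{j\bar l}$ and their conjugates $\Gamma^{\bar k}_{\bar i\bar j}$ survive. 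Finally, since $\partial_\mu J_\nu{}^\rho = 0$ in these coordinates,
\begin{equation*}
(\nabla_\mu J)_\nu{}^\rho = \Gamma^\rho_{\mu\sigma} J_\nu{}^\sigma - \Gamma^\sigma_{\mu\nu} J_\sigma{}^\rho,
\end{equation*}
and a short case check over the holomorphic/antiholomorphic types of $\mu,\nu,\rho$ shows the right-hand side vanishes: whenever a surviving Christoffel symbol appears, the tensor $J$ (diagonal in index type, with constant entries) commutes with it trivially, and every term in which a \emph{mixed}-type symbol would appear has already been shown to vanish. As $U$ and the point were arbitrary, $\nabla J = 0$ on $M$.

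I expect the bookkeeping in this last direction --- enumerating the barred/unbarred index patterns among the Christoffel symbols and verifying that the K\"{a}hler symmetries annihilate exactly the obstructing ones --- to be the main obstacle; it is wholly routine but demands a fixed index convention and some patience. A slicker alternative, which exchanges this computation for a standard structure identity, is to invoke the universal almost-Hermitian formula
\begin{equation*}
2\,g\bigl((\nabla_X J)Y,Z\bigr) = d\omega(X,Y,Z) - d\omega(X,JY,JZ) - g\bigl(N_J(Y,Z),JX\bigr)
\end{equation*}
(up to the usual normalization conventions for $d$ and for the Nijenhuis tensor $N_J$): on our $M$ we have $N_J = 0$ since $M$ is a complex manifold, and $d\omega = 0$ by hypothesis, so the right-hand side vanishes identically and $\nabla J = 0$. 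I would record this as a remark but present the hands-on argument, which uses only the apparatus already built above.
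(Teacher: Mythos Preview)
Your proposal is correct. For the direction $\nabla J = 0 \Rightarrow d\omega = 0$, your argument via the identity $(\nabla_X\omega)(Y,Z) = g((\nabla_X J)Y,Z)$ together with the torsion-free expression of $d\omega$ as the antisymmetrized covariant derivative is exactly the paper's argument, only spelled out with a bit more care.

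For the converse, however, the paper does not give a proof at all: it simply cites external lecture notes. Your holomorphic-coordinate computation --- reading off the symmetries $\partial_k g_{i\bar j} = \partial_i g_{k\bar j}$ from $d\omega = 0$, showing these kill the mixed-type Christoffel symbols so that only $\Gamma^k_{ij}$ and its conjugate survive, and then checking that the constant-coefficient $J$ is annihilated by such a connection --- supplies a complete self-contained argument where the paper offers none. It is worth noting that the paper's very next result (Lemma~1) records precisely the Christoffel-symbol structure you derive and invokes the same K\"{a}hler symmetry $\partial_k g_{i\bar j} = \partial_i g_{k\bar j}$, so your approach is entirely in the spirit of the surrounding exposition; you have effectively front-loaded that lemma and used it to close the gap the paper leaves open. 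The alternative via the Nijenhuis identity you mention is also standard and correct, but as you note, the hands-on route fits the paper's toolkit better.
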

\begin{proof} $(\Leftarrow)$ Assume that $\nabla J = 0$. Recall that $J$ is a
globally defined $(1,1)$-tensor. Then $\nabla J= 0$ means that the components of
$J$ are covariantly constant, $\nabla_\mu J_\nu{}^\rho = 0$. Furthermore, since
$\nabla$ is the Levi-Civita connection, the components of the metric $g$ are
also covariantly constant. Thus the K\"{a}hler form $\omega$ satisfies $\nabla
\omega = 0$. This clearly implies that $d \omega = 0$. Hence $M$ is K\"{a}hler.

$(\Rightarrow)$ For a proof of the converse, we reference Andy Neitzke's Complex Geometry lecture notes [ANCG].
\end{proof}

Let's now discuss the Riemannian Geometry of K\"{a}hler manifolds. Recall that for any arbitrary Riemannian manifold $(M,g)$, let $\nabla$ denote the Levi-Civita connection for $g$. Given any set of local coordinates $x^\mu$, we have that the Christoffel symbols are given by the following equation:
\begin{equation} \Gamma_{\mu \nu}{}^\rho = \frac{1}{2} g^{\rho \sigma} \left( \partial_\mu g_{\sigma \nu} + \partial_\nu g_{\mu \sigma} - \partial_\sigma g_{\mu \nu}   \right).
\end{equation}
More specifically, in local coordinates, the connection $\nabla$ defines a covariant derivative on tensors. So for $T$ a (1,1) tensor, we have the following:
\begin{equation} \nabla_\mu T_\nu{}^\rho = \partial_\mu T_\nu{}^\rho + \Gamma_{\mu \sigma}{}^\rho T_\nu{}^\sigma - \Gamma_{\mu \nu}{}^\sigma T_\sigma{}^\rho.
\end{equation}

Not surprisingly, for a K\"{a}hler manifold, we have an elegant prescription for our Christoffel symbols:
\begin{lemma} For a K\"{a}hler manifold $(M, g, J)$, we have that in complex
coordinates $z^j, \overline{z}^j$ in a neighborhood of a point $z_0 \in M$, the
only non-vanishing components of the Christoffel symbols are:
\begin{equation} \Gamma_{jk}{}^l = (\partial_j g_{k \overline{m}}) g^{\overline{m} l}, \Gamma_{\overline{jk}}{}^{\overline{l}} = \left( \Gamma_{jk}{}^l \right)^*.
\end{equation}
Moreover:
\begin{equation} \Gamma_{jk}{}^k = \partial_j(\log \sqrt{\det g}).
\end{equation}
\end{lemma}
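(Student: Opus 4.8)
The plan is to compute directly from the general formula $\Gamma_{\mu\nu}{}^\rho = \tfrac12 g^{\rho\sigma}(\partial_\mu g_{\sigma\nu} + \partial_\nu g_{\mu\sigma} - \partial_\sigma g_{\mu\nu})$ in complex coordinates $z^j,\overline{z}^j$, exploiting two structural facts. First, since $g$ is Hermitian, its only non-vanishing components in these coordinates are the mixed ones $g_{j\overline{k}}$ (with $\overline{g_{j\overline{k}}}=g_{\overline{j}k}$), and the same holds for the inverse metric, so $g^{jk}=g^{\overline{j}\,\overline{k}}=0$. Second, I would record the coordinate form of the Kähler condition: writing $\omega = \tfrac{i}{2}g_{j\overline{k}}\,dz^j\wedge d\overline{z}^k$ and $d=\partial+\overline{\partial}$, the equation $d\omega=0$ is equivalent to the symmetries
\begin{equation}
\partial_l g_{j\overline{k}} = \partial_j g_{l\overline{k}}, \qquad \partial_{\overline{l}} g_{j\overline{k}} = \partial_{\overline{k}} g_{j\overline{l}}.
\end{equation}

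Next I would run through the Christoffel formula according to the types of the three free indices. The mechanism is simple: in $g^{\rho\sigma}$ the index $\sigma$ must have type opposite to $\rho$, and any term of the form $\partial_\bullet g_{\bullet\bullet}$ with the two lower indices of the same type vanishes by Hermiticity. This instantly kills every component except the pure-type ones $\Gamma_{jk}{}^l$, $\Gamma_{\overline{jk}}{}^{\overline{l}}$, together with the mixed-looking ones such as $\Gamma_{j\overline{k}}{}^l$; for the latter the two potentially surviving terms combine to $\partial_{\overline{k}}g_{j\overline{m}} - \partial_{\overline{m}}g_{j\overline{k}}$, which cancel by the second Kähler symmetry, so $\Gamma_{j\overline{k}}{}^l=0$ and likewise for its relatives. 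For $\Gamma_{jk}{}^l = \tfrac12 g^{l\overline{m}}(\partial_j g_{\overline{m}k} + \partial_k g_{j\overline{m}} - \partial_{\overline{m}}g_{jk})$, the last term vanishes since $g_{jk}=0$, and the first two are equal by the first Kähler symmetry (after using $g_{\overline{m}k}=g_{k\overline{m}}$), cancelling the $\tfrac12$ and giving $\Gamma_{jk}{}^l = (\partial_j g_{k\overline{m}})g^{\overline{m}l}$. The conjugate identity $\Gamma_{\overline{jk}}{}^{\overline{l}} = (\Gamma_{jk}{}^l)^*$ then follows from $\overline{g_{j\overline{k}}}=g_{\overline{j}k}$ and the analogous relation for the inverse, i.e. from the reality of the underlying metric.

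Finally, for the trace I would contract $l=k$ in the formula just obtained to get $\Gamma_{jk}{}^k = g^{\overline{m}k}\partial_j g_{k\overline{m}}$, and then invoke Jacobi's formula for the logarithmic derivative of a determinant, $\partial_j \log\det(g_{a\overline{b}}) = g^{\overline{b}a}\partial_j g_{a\overline{b}}$, using the symmetry $g^{\overline{b}a}=g^{a\overline{b}}$ of the inverse Hermitian matrix. This yields $\Gamma_{jk}{}^k = \partial_j \log\det(g_{a\overline{b}})$, which is $\partial_j(\log\sqrt{\det g})$ once one adopts the standard convention identifying $\det g$ in complex coordinates with $\det(g_{a\overline{b}})$ up to the constant factor relating it to the determinant of the full real $2n\times 2n$ metric. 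I do not expect a genuine obstacle here: the whole argument is bookkeeping with index types, and the only points requiring care are the faithful translation of $d\omega=0$ into the symmetry identities above and the determinant normalization in the last step.
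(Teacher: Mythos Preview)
Your proposal is correct and follows essentially the same approach as the paper: both identify the two structural inputs (Hermiticity forces $g_{jk}=g_{\overline{j}\,\overline{k}}=0$, and $d\omega=0$ yields the symmetry $\partial_j g_{k\overline{l}}=\partial_k g_{j\overline{l}}$ together with its conjugate), then feed these into the Christoffel formula and use Jacobi's formula for the trace. The only difference is expository---the paper simply asserts the first part, citing a reference for the verification, whereas you actually carry out the index-type case analysis (in particular the cancellation showing $\Gamma_{j\overline{k}}{}^l=0$); your explicit treatment of the determinant normalization in the last step also makes transparent where the paper's factor of $\tfrac12$ in $\tfrac12\,\mathrm{Tr}[g^{-1}\partial_j g]$ comes from.
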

\begin{proof} The first condition follows two facts. The first fact is that $g$
is locally given by the Hermitian positive definite matrix $h_{ij}$, therefore
$h_{ij} = h_{\overline{ij}}= 0$, i.e. the only nonzero components of our metric
are the components corresponding to mixed holomorphic and antiholomorphic
indices. The second fact is that since $g$ is a K\"{a}hler metric, $d \omega = 0
\Rightarrow \partial_j h_{k \overline{l}} = \partial_k h_{j \overline{l}}$ and
the complex conjugate of these statements. For a proof of this statement, we
refer the reader to [JST]. Thus we have that:
\begin{align}  \Gamma_{jk}{}^k &= (\partial_j g_{k \overline{l}})g^{\overline{l}k}
\\ &= \frac{1}{2} \textrm{Tr}[g^{-1} \partial_j g]
\\&= \frac{1}{2} \partial_j [\log \det g]
\\&= \partial_j \log \sqrt{\det g}.
\end{align}
\end{proof}

A byproduct of all these nice K\"{a}hler identities is that the Ricci tensor takes a particularly nice form. Again, the pure holomorphic and pure antiholomorphic components of the Ricci tensor will vanish. The nonvanishing components of the Ricci tensor will given by:
\begin{equation} R_{j \overline{k}} = \partial_j \partial_{\overline{k}} [ \log \sqrt{\det g}]. 
\end{equation} 
Given this fact, we may define the \textit{Ricci Form} given by $R = i R_{j \overline{k}} dz^j \wedge d\overline{z}^k = i \partial \overline{\partial} \log \sqrt{\det g}$. This Ricci Form $R$ defines a cohomology class, which we call the \textit{first Chern class} $c_1 = [\frac{1}{2\pi} R]$. 

We now have introduced all the relevant concepts that will allow us to begin discussing the Calabi Conjecture. However, before we move on to the statement and philosophy of the Calabi Conjecture, we will actually introduce and prove what will turn out to be a crucial little lemma:
\begin{lemma} Let $(M, g)$ be a K\"{a}hler manifold with K\"{a}hler form $\omega$. For a $f \in C^0(M)$, we may define $A$ via:
\begin{equation} A \int_M e^f dV_g = \textrm{Vol}_g(M).
\end{equation} 
Suppose that $\varphi \in C^2(M)$ satisfies $(\omega + i\partial
\overline{\partial} \varphi)^n = Ae^f \omega^n$ on $M$. Then we have that
$\omega + i\partial \overline{\partial} \varphi$ is a positive (1,1) form. 
\end{lemma}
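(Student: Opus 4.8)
The plan is to exploit the fact that the Monge--Amp\`ere equation forces the top exterior power of $\omega' := \omega + i\partial\overline{\partial}\varphi$ to be a strictly positive multiple of the volume form, and then to use the compactness of $M$ (which is implicit both in the definition of $A$, since $\int_M e^f\,dV_g$ and $\mathrm{Vol}_g(M)$ must be finite and positive, and in the setting of the Calabi conjecture) to locate a single point at which $\omega'$ is manifestly positive. Constancy of the signs of the eigenvalues then propagates positivity to all of $M$.

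First I would fix an arbitrary $p \in M$ and diagonalize $\omega'$ with respect to $\omega$: in coordinates in which $\omega$ is the standard form at $p$ and $i\partial\overline{\partial}\varphi$ is diagonal at $p$, the form $\omega'$ acquires real eigenvalues $\lambda_1(p) \le \cdots \le \lambda_n(p)$ relative to $\omega$. Since $\varphi \in C^2(M)$, the coefficients of $i\partial\overline{\partial}\varphi$ are continuous, hence each $\lambda_i \colon M \to \mathbb{R}$ is continuous. Expanding the $n$-fold wedge product gives $(\omega')^n = \bigl(\prod_{i=1}^{n}\lambda_i\bigr)\,\omega^n$, so the hypothesis $(\omega')^n = Ae^f\omega^n$ together with $A > 0$ and $e^f > 0$ yields $\prod_{i}\lambda_i(p) = Ae^{f(p)} > 0$ for every $p$. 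In particular no $\lambda_i$ ever vanishes; since $M$ is connected (otherwise argue on each component separately), each continuous function $\lambda_i$ has a constant sign on $M$.

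It remains to determine that sign, and here I would evaluate at a point $p_0$ where $\varphi$ attains its global minimum, which exists because $M$ is compact. At such a minimum the real Hessian of $\varphi$ is positive semidefinite; using the relation $\partial_j\partial_{\overline{k}} = \tfrac{1}{4}(\partial_{x^j} - i\partial_{y^j})(\partial_{x^k} + i\partial_{y^k})$ one sees that the complex Hessian form $\sum_{j,k}(\partial_j\partial_{\overline{k}}\varphi)\,v^j\overline{v}^k$ is a sum of two evaluations of the real Hessian on real tangent vectors, so $(\partial_j\partial_{\overline{k}}\varphi)(p_0)$ is positive semidefinite. Consequently $\omega'(p_0) = \omega(p_0) + i\partial\overline{\partial}\varphi(p_0) \ge \omega(p_0) > 0$, whence $\lambda_i(p_0) > 0$ for all $i$. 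Combined with the sign--constancy from the previous step, $\lambda_i > 0$ on all of $M$, i.e. $\omega'$ is a positive $(1,1)$ form.

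The main obstacle — really the only nonroutine point — is recognizing that positivity of the top power (equivalently, of the "determinant") by itself does not pin down the signs of the eigenvalues, since a form with an even number of negative eigenvalues still has positive top power; one therefore needs the extra input of a distinguished point, and the compactness of $M$ together with the elementary second-derivative test at $\min_M\varphi$ supplies exactly that. The verification that a minimum of a real $C^2$ function forces its complex Hessian to be positive semidefinite is the short linear-algebra computation indicated above, which I would carry out in full in the write-up.
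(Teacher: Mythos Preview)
Your argument is essentially the paper's own: both use the Monge--Amp\`ere equation to see that $\det(\widetilde{g})=Ae^{f}\det(g)>0$ (equivalently $\prod_i\lambda_i>0$), invoke continuity and connectedness to conclude the signs of the eigenvalues are constant, and then locate a global minimum of $\varphi$ (via compactness of $M$) where the complex Hessian is positive (semi)definite, forcing all eigenvalues to be positive there and hence everywhere. If anything, your treatment of the minimum point is slightly more careful, since you correctly note the Hessian is only positive \emph{semi}definite and then use the strict positivity of $\omega$ to conclude.
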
 
\begin{proof} Consider any set of holomorphic coordinates $z^j, j = 1, \dots, n$ on a connected patch $U \subset M$. Then since $\omega$ and $\omega + i\partial \overline{\partial} \varphi$ are cohomologous, we have a new metric $\widetilde{g}$ in the patch $U$ given by:
\begin{equation} \widetilde{g}_{j \overline{k}} = g_{j \overline{k}} + \frac{\partial^2 \varphi}{\partial z^j \partial \overline{z}^k}.
\end{equation} 
Up to the reader's conventions, we have that $\omega^n = n! \ d\textrm{Vol}_g$, thus $\widetilde{g}_{j \overline{k}}$ is a Hermitian metric if and only if $\omega + i\partial \overline{\partial} \varphi$ is a positive (1,1) form, i.e. if and only if the eigenvalues of $\widetilde{g}_{j \overline{k}}$ are all positive. Hence, it suffices to show that $\widetilde{g}_{j \overline{k}}$ has positive eigenvalues. 

To this end, by hypothesis, we have that: $(\omega + i\partial
\overline{\partial} \varphi)^n = Ae^f \omega^n$ on $M$. This immediately implies
that:
\begin{equation} \det(\widetilde{g}) = Ae^f \det(g). 
\end{equation} 
Thus $\det(\widetilde{g}) > 0$ and hence must have no zero eigenvalue. But by continuity, if the eigenvalues of $\widetilde{g}$ are positive at some point $z_0$, they must be positive in a neighborhood of $z_0$. By connectedness, we conclude that the eigenvalues must be positive on all of $M$. 

By compactness of $M$ and continuity of $\varphi$, $\varphi$ obtains a minimum at some point $p \in M$. Let $U$ be a patch about a minimum point $p$. Since $p$ is a point where $\varphi$ achieves it's minimum, we conclude that the matrix $ \left[  \frac{\partial^2 \varphi}{\partial z^j \partial \overline{z}^k} \right] $ has positive eigenvalues. We may conclude this since in higher dimensions, the Hessian of $\varphi$ must be positive definite at a minimum point $p$. As $g$ is a Hermitian metric, it has only positive eigenvalues as well. Thus we conclude that $\widetilde{g}$ must have positive eigenvalues and thus defines a Hermitian metric and hence we may conclude that it's K\"{a}hler form $\omega + i\partial \overline{\partial} \varphi$ is a positive (1,1) form.
\end{proof}

\section{Introduction to Calabi Conjecture}
We start by assuming that $M$ is a compact K\"{a}hler Manifold with K\"{a}hler
metric $g$ = $\sum_{i,j} g_{i\bar{j}} dz^i \otimes dz^{\bar{j}}$ and Ricci
Tensor $R$ = $\sum_{i,j} R_{i\bar{j}} dz^i \otimes dz^{\bar{j}}$. We showed
above that
\\ 
\begin{align} 
R_{i\bar{j}} = - \frac{\partial^2}{\partial z^i \partial z^{\bar{j}}}
[\log\det(g_{k\bar{l}})].
\end{align}
\\
\textit{Note:} Up to sign choices and the constant $\frac{1}{2}$, our Ricci
tensor agrees with Yau's. Note in Equation 32, we have a leading $\frac{1}{2}$
factor, while Yau does not and the minus sign comes from choice of $\sqrt{-1}$
convention. 
\\
\\
This implies that  the closed real (1,1) form $\frac{i}{2\pi}\sum_{i,j}
R_{i\bar{j}} dz^i \wedge dz^{\bar{j}}$ can equivalently be written as
$-\frac{i}{2\pi}\partial\bar{\partial}[\log\det(g_{k\bar{l}})]$. According
to a theorem proved by S.S. Chern [SS] the cohomology class of this particular
(1,1) form depends only on the Complex Structure of $M$. Furthermore this
closed, real (1,1) form is exactly equal to the first Chern Class of $M$. The
Calabi Conjecture is to look at the converse of this statement. 
\begin{theorem}
(Calabi Conjecture 1954) Let M be a compact, complex manifold and g a K\"{a}hler
metric on M with K\"{a}hler form $\omega$. Suppose $\tilde{R}$ is a closed, real
(1,1) form on M with $[\tilde{R}]$ = $c_{1}(M)$. Then there exists unique
K\"{a}hler metric $\tilde{g}$ on M with K\"{a}hler form $\tilde{\omega}$ such
that $[\tilde{\omega}]$ = $[\omega]$ $\in H^{2}(M,\mathbb{R})$ and the Ricci
form of $\tilde{\omega}$ is  $\tilde{R}$.   
\end{theorem}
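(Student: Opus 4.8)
The plan is to convert the statement into a single scalar nonlinear PDE and solve that equation by the continuity method, following Yau.

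First I would reduce the problem to a complex Monge--Amp\`ere equation. Since we require $[\tilde\omega] = [\omega]$, the Global $\partial\overline{\partial}$ Lemma (and the proposition following it) forces any admissible $\tilde\omega$ in this cohomology class to have the form $\tilde\omega = \omega + i\partial\overline{\partial}\varphi$ for a real function $\varphi$, unique up to an additive constant. Next, because $\tilde R$ and the Ricci form $R_\omega$ of $\omega$ both represent $c_1(M)$, their difference is $d$-exact, hence real-$(1,1)$ and $d$-closed, so by the same lemma $\tilde R - R_\omega = -i\partial\overline{\partial}f$ for some smooth real $f$; adding a constant to $f$ we may impose the normalization $A\int_M e^f\,dV_g = \mathrm{Vol}_g(M)$ from the crucial lemma proved above. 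Using the local expression $R_{j\overline{k}} = \partial_j\partial_{\overline{k}}\log\sqrt{\det g}$ for the Ricci form, the condition ``Ricci form of $\tilde\omega$ equals $\tilde R$'' becomes $\partial\overline{\partial}\log(\det\tilde g/\det g) = -\partial\overline{\partial}f$, i.e. $\log(\det\tilde g/\det g) = -f + \mathrm{const}$; absorbing the constant into $A$ this is exactly
\[
(\omega + i\partial\overline{\partial}\varphi)^n = A e^f\,\omega^n
\]
together with the admissibility constraint that $\omega + i\partial\overline{\partial}\varphi$ be a positive $(1,1)$ form. By the crucial lemma, every $C^2$ solution of the displayed equation automatically satisfies this positivity constraint, so it suffices to produce a single smooth solution $\varphi$.

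Uniqueness is the easy half. If $\varphi_1,\varphi_2$ both solve the Monge--Amp\`ere equation with the same right-hand side, then factoring $\tilde\omega_1^{\,n} - \tilde\omega_2^{\,n}$ as $i\partial\overline{\partial}(\varphi_1-\varphi_2)\wedge\big(\sum_{k=0}^{n-1}\tilde\omega_1^{\,k}\wedge\tilde\omega_2^{\,n-1-k}\big)$, pairing with $\varphi_1-\varphi_2$ and integrating by parts over the compact $M$ forces $\partial(\varphi_1-\varphi_2)=0$, so $\varphi_1-\varphi_2$ is constant; hence $\tilde g$, and the normalized potential, are unique. What remains is existence. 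For $t\in[0,1]$ consider the family $(\omega + i\partial\overline{\partial}\varphi_t)^n = A_t e^{tf}\omega^n$ with $A_t$ chosen so volumes agree; at $t=0$ we have $\varphi_0\equiv 0$. Let $S\subseteq[0,1]$ be the set of $t$ admitting a smooth admissible solution. Openness of $S$ follows from the implicit function theorem in H\"older spaces $C^{k,\alpha}$: the linearization of $\varphi\mapsto\log(\tilde\omega^n/\omega^n)$ at a solution is a positive multiple of the Laplace--Beltrami operator of $\tilde g_t$, an isomorphism between the subspaces of functions with zero $\tilde g_t$-average, and the nonlinear map is smooth. Closedness of $S$ is the heart of the matter: one needs a priori estimates, uniform in $t$, for admissible solutions --- a zeroth-order bound $\|\varphi\|_{C^0}\le C$ by Moser iteration applied to powers of $\varphi$ via the Sobolev inequality on $(M,g)$; a second-order bound controlling $n+\Delta_g\varphi$ by applying the maximum principle to a quantity of the form $e^{-C\varphi}(n+\Delta_g\varphi)$, using concavity of $\log\det$ and a lower bound on the bisectional curvature of $g$; then a $C^1$ bound by interpolation, a Calabi-type third-derivative estimate (or Evans--Krylov) to reach $C^{2,\alpha}$, and Schauder bootstrapping for all higher $C^{k,\alpha}$ norms. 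Arzel\`a--Ascoli then gives closedness, so $S=[0,1]$; in particular $1\in S$, and $\tilde\omega=\omega+i\partial\overline{\partial}\varphi$, $\tilde g$ are the desired K\"{a}hler form and metric. The main obstacle, by a wide margin, is establishing the uniform $C^0$ and especially the $C^2$ a priori estimates; the reduction and the openness step are essentially formal, and closedness is a soft compactness argument once the estimates are in hand.
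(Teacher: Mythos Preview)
Your proposal is correct and follows essentially the same architecture as the paper: reduce to the complex Monge--Amp\`ere equation via the Global $\partial\overline{\partial}$ Lemma, prove uniqueness by the telescoping integration-by-parts argument, and obtain existence by the continuity method with openness from the implicit function theorem (linearization is the Laplace--Beltrami operator of $\tilde g$) and closedness from Yau's a~priori estimates plus Schauder bootstrapping and Arzel\`a--Ascoli. The only cosmetic difference is that you sketch the internal mechanics of the $C^0$ and $C^2$ estimates (Moser iteration, the auxiliary function $e^{-C\varphi}(n+\Delta_g\varphi)$), whereas the paper simply invokes Yau's Second and Third Order Estimates as black-box theorems.
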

Calabi was able to show that if such a  $\tilde{g}$ exists then it must be
unique. \textit{The key insight to solving the problem was to reduce the problem
to a nonlinear Elliptic PDE of Monge-Amp\`{e}re type.}
\\  

Assume that the Calabi Conjecture is in fact true. Since the metrics have been
shown to be cohomologous, we know by the Global $\partial\bar{\partial}$-Lemma,
that $\exists$ a smooth real function $\varphi$ unique up to the addition of a
constant, such that $\tilde{\omega} = \omega + i\partial\bar{\partial}\varphi$.
Furthermore locally we have the following representation for our metrics:
$\tilde{g}_{i\bar{j}} =  g_{i\bar{j}}+\frac{\partial^2 \varphi}{\partial z^i
\partial z^{\bar{j}}}$. Note that this allows us to paramaterize a class of
K\"{a}hler metrics by a single smooth function $\varphi$.
\\

Turning our attention to their respective Ricci forms let $\tilde{R} =
\frac{i}{2\pi}\sum_{i,j} \tilde{R}_{i\bar{j}} dz^i \wedge dz^{\bar{j}}$
represent the first Chern Class of our manifold $M$. Recall also that
$\tilde{R}_{i\bar{j}} = - \frac{\partial^2}{\partial z^i \partial z^{\bar{j}}}
[\log\det(g_{k\bar{l}})]$. Since $[R - \tilde{R}]$ = 0, another application of
the Global $\partial\bar{\partial}$-Lemma demonstrates that there exists a
smooth function $F$ defined on $M$ such that: $\tilde{R}_{i\bar{j}} - 
R_{i\bar{j}} = \frac{\partial^2 F}{\partial z^i \partial z^{\bar{j}}}$. Hence:
\\
\begin{align}
\partial\bar{\partial}\log\frac{\det(\tilde{g}_{i\bar{j}})}{\det(g_{i\bar{j}})}
= \partial\bar{\partial}F.
\end{align}
\\

Since $M$ is a compact manifold, and
$\frac{\det(\tilde{g}_{i\bar{j}})}{\det(g_{i\bar{j}})}$ is a globally defined
function (this can be demonstrated by looking at the metric locally and applying
a change of coordinates), a further application of the Global
$\partial\bar{\partial}$-Lemma  shows that
$\frac{\det(\tilde{g}_{i\bar{j}})}{\det(g_{i\bar{j}})} - F$ is a constant
function. Hence there exists a constant $C > 0$ such that:
\\
\begin{align}
\det(\tilde{g}_{i\bar{j}}) = C\exp{(F)}\det(g_{i\bar{j}}).
\end{align}
\\
Recalling the fact that our K\"{a}hler forms are cohomologous we find that the
above equation is equivalent to:
\\
\begin{align}
\det\left(g_{i\bar{j}}+\frac{\partial^2 \varphi}{\partial z^i \partial
z^{\bar{j}}}\right) = C\exp{(F)}\det(g_{i\bar{j}}).
\end{align}
\\
This equation is a nonlinear, elliptic, second-order partial differential
equation in $\varphi$. It is of Monge-Amp\`{e}re type (Note: Ellipticity follows
from the fact that the metric is assumed to be Hermitian).  We now run the
argument backwards and argue that if we can find a constant $C >$ 0 such that
there exists a smooth solution $\varphi$ satisfying the integrability condition
$\int_M \varphi = 0$ and 
$\tilde{g} = \sum_{i,j} g_{i\bar{j}}+\frac{\partial^2 \varphi}{\partial z^i
\partial z^{\bar{j}}} dz^i \otimes dz^{\bar{j}}$ defines a K\"{a}hler Metric
then we will have a solution to the Calabi Conjecture. Note the integrability
condition is imposed so that $\varphi$ is chosen to be unique. Not imposing this
integrability condition gives us a solution to the Calabi Conjecture up to a
constant.    

Moreover one readily sees that the constant $C$ must satisfy the following
compatibility condition:
\\
\begin{align}
C\int_M \exp{ \{F\}} = \textrm{Vol}(M).
\end{align}
\\
We now have an equivalent formulation of the Calabi Conjecture:
\begin{theorem}
(Calabi Conjecture Reformulation) Let M be a compact, complex manifold and g a
K\"{a}hler metric on M with K\"{a}hler form $\omega$. Let F be a smooth function
on M and let C $>$ 0 be defined via the compatibility condition $C\int_M \exp{
\{F\}} = Vol(M)$. Then $\exists$ unique smooth function $\varphi$ such that:
\\
(i) $\tilde{g} = \sum_{i,j} g_{i\bar{j}}+\frac{\partial^2 \varphi}{\partial z^i
\partial z^{\bar{j}}} dz^i \otimes dz^{\bar{j}}$ defines a K\"{a}hler Metric 
\\
(ii) $\int_M \varphi = 0$
\\
(iii) $\det\left(g_{i\bar{j}}+\frac{\partial^2 \varphi}{\partial z^i \partial z^{\bar{j}}}\right) = C\exp{(F)}\det(g_{i\bar{j}})$
\end{theorem}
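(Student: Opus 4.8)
The plan is to prove uniqueness by a maximum-principle argument and existence by the method of continuity, whose heart is a chain of a priori estimates for the complex Monge--Amp\`ere equation (iii).

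\emph{Uniqueness.} Suppose $\varphi_1,\varphi_2$ both satisfy (i)--(iii), and set $\psi=\varphi_1-\varphi_2$. For $s\in[0,1]$ define $(\tilde g_s)_{i\bar j}=g_{i\bar j}+\partial_i\partial_{\bar j}(\varphi_2+s\psi)=(1-s)(\tilde g_0)_{i\bar j}+s(\tilde g_1)_{i\bar j}$; being a convex combination of two positive Hermitian forms, each $\tilde g_s$ is positive definite. Subtracting the two copies of (iii) gives $\log\det\tilde g_1=\log\det\tilde g_0$, hence
\[
0=\int_0^1\frac{d}{ds}\log\det\tilde g_s\,ds=\Big(\int_0^1\tilde g_s^{\,i\bar j}\,ds\Big)\,\partial_i\partial_{\bar j}\psi=:L\psi .
\]
The operator $L$ is linear, second order, elliptic, with no zeroth-order term, so by the strong maximum principle on the compact manifold $M$ the equation $L\psi=0$ forces $\psi$ to be constant, and then (ii) yields $\psi\equiv 0$.

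\emph{Existence via continuity.} Replacing $F$ by $F+\log C$ we may assume $C=1$ and $\int_M e^F\,dV_g=\mathrm{Vol}_g(M)$. For $t\in[0,1]$ let $C_t>0$ be defined by $C_t\int_M e^{tF}\,dV_g=\mathrm{Vol}_g(M)$ and consider
\[
(\ast_t)\qquad\det\!\big(g_{i\bar j}+\partial_i\partial_{\bar j}\varphi\big)=C_t e^{tF}\det(g_{i\bar j}),\qquad \int_M\varphi\,dV_g=0 .
\]
Fix a non-integer $\alpha\in(0,1)$ and put $S=\{\,t\in[0,1]:(\ast_t)\text{ has a solution }\varphi_t\in C^{2,\alpha}(M)\text{ with }\omega+i\partial\bar\partial\varphi_t>0\,\}$. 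Then $0\in S$ (take $\varphi_0\equiv 0$, since $C_0=1$), and it suffices to show $S$ is open and closed, hence all of $[0,1]$; elliptic bootstrapping will then promote $\varphi_1$ to $C^\infty$. For \emph{openness}: at a solution $\varphi_{t_0}$ the linearization of the left side of $(\ast_t)$ in $\varphi$ is $\psi\mapsto\tilde g^{\,i\bar j}\partial_i\partial_{\bar j}\psi$, which up to a positive factor is the Laplace--Beltrami operator of $\tilde g=\tilde g_{t_0}$; on a compact manifold this is a self-adjoint elliptic Fredholm operator of index zero whose kernel is the constants, hence an isomorphism from $\{\psi\in C^{2,\alpha}:\int\psi\,dV_g=0\}$ onto $\{h\in C^{0,\alpha}:\int h\,dV_g=0\}$. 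The implicit function theorem in these Banach spaces (treating $C_t$ as the smooth function of $t$ and $\varphi$ fixed by the compatibility condition) produces $C^{2,\alpha}$ solutions for $t$ near $t_0$, and positivity of $\omega+i\partial\bar\partial\varphi_t$ persists for $t$ near $t_0$ since positivity is an open condition.

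\emph{Closedness --- the main obstacle.} This is exactly where Yau's analytic work enters: one needs a priori bounds for solutions of $(\ast_t)$ \emph{uniform in $t$}, carried out in order: (1) a $C^0$ bound $\|\varphi_t\|_{C^0}\le C_1$, via Yau's Moser iteration applied to $(\ast_t)$ together with $\int_M\varphi_t\,dV_g=0$ and the Sobolev/Poincar\'e inequalities of $(M,g)$; (2) a Laplacian bound $0<n+\Delta_g\varphi_t\le C_2$, obtained by applying the maximum principle to $\Delta_{\tilde g}\big(\log(n+\Delta_g\varphi_t)-\lambda\varphi_t\big)$ with $\lambda$ chosen from a lower bound for the holomorphic bisectional curvature of $g$ (the Aubin--Yau inequality), $C_2$ depending only on $C_1$ and $\|F\|_{C^2}$; because $\det\tilde g_t=C_t e^{tF}\det g$ is pinched between positive constants, this makes $(\ast_t)$ \emph{uniformly elliptic}, i.e.\ the eigenvalues of $\tilde g_t$ stay in a fixed compact subset of $(0,\infty)$; (3) a $C^{2,\alpha}$ bound, via Calabi's third-derivative estimate (or Evans--Krylov applied to the concave operator $\log\det$); (4) Schauder bootstrapping, giving uniform $C^{k,\alpha}$ bounds for all $k$. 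With these in hand, if $t_j\to t_\infty$ with solutions $\varphi_{t_j}$, Arzel\`a--Ascoli extracts a subsequence converging in $C^{2,\alpha}$ to some $\varphi_{t_\infty}$ solving $(\ast_{t_\infty})$, and the \emph{crucial Lemma} proved above (applied with $A=C_{t_\infty}$, $f=t_\infty F$) guarantees $\omega+i\partial\bar\partial\varphi_{t_\infty}$ is a positive $(1,1)$ form, so $t_\infty\in S$. I expect steps (1) and (2) to be the technically hardest; once uniform ellipticity is secured, (3)--(4) are comparatively routine. Solving $(\ast_1)$ with the resulting smooth $\varphi=\varphi_1$ gives (i), (ii), (iii), and together with the uniqueness argument above this completes the proof.
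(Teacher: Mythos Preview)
Your existence argument matches the paper's almost exactly: both run the continuity method on the one-parameter family $(\ast_t)$, establish openness via the implicit function theorem by identifying the linearization with the Laplace--Beltrami operator of the perturbed metric acting on mean-zero functions, and establish closedness by invoking Yau's a priori $C^0$, second-order, and third-order estimates followed by Schauder bootstrapping and Arzel\`a--Ascoli. The only cosmetic difference is that the paper carries the continuity in $C^{k+1,\alpha}$ directly, whereas you work in $C^{2,\alpha}$ and bootstrap afterward; and you mention Evans--Krylov as an alternative to Calabi's third-derivative computation, which the paper does not.

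Your uniqueness argument, however, is genuinely different from the paper's. The paper follows Calabi's original energy method: it writes $\omega_1^m-\omega_2^m = dd^c(\varphi_1-\varphi_2)\wedge\sum_{k}\omega_1^k\wedge\omega_2^{m-k-1}$, multiplies by $\varphi_1-\varphi_2$, integrates by parts via Stokes, and uses positivity of $\omega_1,\omega_2$ to conclude $d(\varphi_1-\varphi_2)=0$. You instead interpolate linearly between the two metrics, differentiate $\log\det$ along the path to obtain a linear elliptic equation $L\psi=0$ with no zeroth-order term, and invoke the strong maximum principle. Both are standard and correct; your route is arguably more elementary (no Hodge-star computations, no local orthonormal frames) and generalizes more readily, while the paper's integration-by-parts argument has the virtue of requiring only $C^3$ regularity rather than appealing to the maximum principle for an operator with merely continuous coefficients.
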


Note: Assuming such a $\varphi$ exists, (i) of the theorem follows from (iii)
and the lemma proved in the last section. Before closing the present section we
prove that our solution to the Complex Monge Amp\`{e}re equation is unique. 
\\
\begin{theorem}
(Uniqueness of K\"{a}hler Metric) Under the assumptions of the previous theorem,
the smooth function $\varphi$ is unique.
\end{theorem}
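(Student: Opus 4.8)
The plan is to assume two solutions $\varphi_1, \varphi_2$ of the complex Monge–Amp\`ere equation (iii) with the same constant $C$ and show $\varphi_1 = \varphi_2$. Write $\psi = \varphi_1 - \varphi_2$, and let $\omega_1 = \omega + i\partial\bar\partial\varphi_1$, $\omega_2 = \omega + i\partial\bar\partial\varphi_2$ with associated K\"ahler metrics $g_1, g_2$ (both positive by the lemma of Section 1). The key algebraic identity is that dividing the two Monge–Amp\`ere equations gives $\det(g_1)/\det(g_2) = 1$, i.e. $\omega_1^n = \omega_2^n$. I would then exploit the telescoping factorization
$$
\omega_1^n - \omega_2^n = (\omega_1 - \omega_2)\wedge\bigl(\omega_1^{n-1} + \omega_1^{n-2}\wedge\omega_2 + \dots + \omega_2^{n-1}\bigr) = i\partial\bar\partial\psi \wedge T,
$$
where $T = \sum_{k=0}^{n-1}\omega_1^k\wedge\omega_2^{n-1-k}$ is a positive $(n-1,n-1)$-form (a sum of wedge products of positive $(1,1)$-forms). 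Since $\omega_1^n = \omega_2^n$, the left side vanishes, so $i\partial\bar\partial\psi \wedge T = 0$.

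Next I would run the standard integration-by-parts / maximum-principle argument. Multiply $i\partial\bar\partial\psi \wedge T = 0$ by $\psi$ and integrate over the compact manifold $M$; using $d(\psi\, \partial\bar\partial\psi\wedge T) = \partial\psi\wedge\bar\partial\psi\wedge T + \psi\,\partial\bar\partial\psi\wedge T$ together with $dT = 0$ (each $\omega_i$ is closed) and Stokes' theorem, one obtains
$$
\int_M i\,\partial\psi\wedge\bar\partial\psi\wedge T = 0.
$$
Because $T$ is a positive $(n-1,n-1)$-form, the integrand $i\,\partial\psi\wedge\bar\partial\psi\wedge T$ is a nonnegative volume form, and it can vanish identically only if $\partial\psi = 0$ everywhere; since $\psi$ is real this forces $\bar\partial\psi = 0$ as well, hence $d\psi = 0$, so $\psi$ is constant on the connected manifold $M$. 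Finally the normalization condition (ii), $\int_M\varphi_1 = \int_M\varphi_2 = 0$, forces that constant to be zero, so $\varphi_1 = \varphi_2$.

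I expect the main obstacle to be the positivity bookkeeping in the step $i\,\partial\psi\wedge\bar\partial\psi\wedge T \geq 0$: one has to check carefully, working in local holomorphic coordinates where $\omega_1, \omega_2$ are simultaneously diagonalized at a point, that the coefficient of $T$ in each mixed term is genuinely nonnegative and that the contraction with $i\,\partial\psi\wedge\bar\partial\psi$ yields a nonnegative multiple of the volume form, with equality exactly when $\partial\psi = 0$. The other place requiring care is justifying the integration by parts: $\varphi_i$ is only assumed $C^2$, so strictly speaking one should either invoke elliptic regularity (solutions of the Monge–Amp\`ere equation with smooth data are smooth) to upgrade $\varphi_i$ to $C^\infty$, or approximate; I would simply remark that (iii) with smooth $F$ and $g$ gives $\varphi_i \in C^\infty(M)$ by standard elliptic theory and proceed. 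An alternative to the whole argument, worth mentioning, is the pure maximum-principle route: at a point where $\psi$ is maximal the Hessian comparison gives $\det g_1 \le \det g_2$ there, and at a minimal point the reverse inequality; combined with $\det g_1 = \det g_2$ identically this can also be pushed to conclude $\psi$ is constant, but the integral argument above is cleaner and is the one I would write up.
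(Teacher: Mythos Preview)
Your argument is correct and is essentially the same as the paper's: the paper uses the operator $d^c = i(\bar\partial - \partial)$ and writes $0 = \omega_1^m - \omega_2^m = dd^c(\varphi_1 - \varphi_2)\wedge\sum_k\omega_1^k\wedge\omega_2^{m-1-k}$, multiplies by $\varphi_1-\varphi_2$, integrates by parts via Stokes, and then verifies positivity of the integrand by an explicit local computation expressing $\omega_1^k\wedge\omega_2^{m-1-k}$ via the Hodge star in coordinates where $\omega_1,\omega_2$ are simultaneously diagonal. The only cosmetic differences are your use of $i\partial\bar\partial$ in place of $dd^c$ and your more conceptual phrasing of the positivity step; regarding regularity, the paper simply assumes $\varphi_1,\varphi_2\in C^3(M)$ from the outset rather than invoking elliptic bootstrapping.
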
   
\begin{proof}
Suppose we have $\varphi_{1}$ and $\varphi_{2}$ solving the Complex Monge
Amp\`{e}re Equation. Further assume that $\varphi_{1}$, $\varphi_{2}$ $\in
C^{3}(M)$. Let $\omega_{1} = \omega + i\partial \overline{\partial} \varphi_{1}$
and $\omega_{2} = \omega + i\partial \overline{\partial} \varphi_{2}$. By the
lemma in the previous section we know that both $\omega_{1}$ and $\omega_{2}$
are positive (1,1)-forms. Let $g_{1}$ and $g_{2}$ be the respective K\"{a}hler
Metrics.  
\\
We introduce the operator $d^{c} = i(\bar{\partial} - \partial)$. We also
introduce the Hodge Star operator $\star$. Let $V$ be an oriented vector space
of dimension $m$. Furthermore let $\{e_{i}\}$ be a positive oriented orthonormal
basis for $V$.  Then Hodge Star operator $\star$ is defined by $\alpha \wedge
\star \beta = g(\alpha,\beta)\textnormal{vol}$. 
\\  

We have:
\\
\begin{equation}
0 = \omega_{1}^{m} - \omega_{2}^{m} = dd^{c}\left(\varphi_{1} -
\varphi_{2}\right) \wedge \sum_{k = 0}^{m-1} \omega^{k}_{1} \wedge
\omega^{m-k-1}_{2}.  
\end{equation}
\\
Multiply both sides by $\left(\varphi_{1} - \varphi_{2}\right)$ We have:
\begin{equation}
0 = \left(\varphi_{1} - \varphi_{2}\right) dd^{c}\left(\varphi_{1} -
\varphi_{2}\right) \wedge \sum_{k = 0}^{m-1} \omega^{k}_{1} \wedge
\omega^{m-k-1}_{2},
\end{equation}
\\
which in turns implies the following:
\\
\begin{equation}
0 = d\left[\left(\varphi_{1} - \varphi_{2}\right)d^{c}\left(\varphi_{1} -
\varphi_{2}\right) \wedge\sum_{k = 0}^{m-1} \omega^{k}_{1} \wedge
\omega^{m-k-1}_{2}\right] - d\left(\varphi_{1} - \varphi_{2}\right) \wedge d^{c}
\left(\varphi_{1} - \varphi_{2}\right) \wedge \sum_{k = 0}^{m-1} \omega^{k}_{1}
\wedge \omega^{m-k-1}_{2}.
\end{equation}
\\
We integrate over $M$ and use Stokes Theorem to get:
\begin{equation}
\sum_{k = 0}^{m-1} \int_M d\left(\varphi_{1} - \varphi_{2}\right) \wedge d^{c}
\left(\varphi_{1} - \varphi_{2}\right) \wedge \omega^{k}_{1} \wedge
\omega^{m-k-1}_{2} = 0.
\end{equation}
\\

Since $\omega_{1}$ and $\omega_{2}$ define K\"{a}hler Metrics $g_{1}$ and
$g_{2}$ we can choose a local holomorphic coordinate system around each $x \in
M$ where $\{e_{1} \dots e_{m},Je_{1} \dots Je_{m}\}$ is an orthonormal basis
with $e_{\alpha} = \frac{\partial}{\partial x_{\alpha}}(x)$ and $Je_{\alpha} = 
\frac{\partial}{\partial y_{\alpha}}(x)$. Also:
\\
\begin{align}
\omega_{1} = \sum_{j = 1}^{m} e_{j} \wedge Je_{j}
\end{align}
\begin{align}
\omega_{2} = \sum_{j = 1}^{m} a_{j}e_{j} \wedge Je_{j}
\end{align}
\\
Where $a_{j}$ are strictly positive local functions.  This shows that:
\begin{equation}\omega^{k}_{1} \wedge \omega^{m-k-1}_{2} = \star \left(\sum_{j =
1}^{m} b^{k}_{j}e_{j} \wedge Je_{j} \right) \textrm{ where } b^{k}_{j} =
k!(m-k-1)! \sum_{\stackrel{j_{1}\neq j \dots j_{k} \neq j} {j_{1} < \dots <
j_{k}}} a_{j_{1}} \dots a_{j_{k}}.\end{equation}
\\

This shows that the integrand is strictly positive unless $0 =
d\left(\varphi_{1} - \varphi_{2}\right)$. Thus $\varphi_{1} - \varphi_{2}$ is a
constant. Since we imposed the integrability condition the constant is equal to
0. Hence $\varphi_{1} = \varphi_{2}$.     
\end{proof}

In the next section we look at some of the important results from Elliptic PDE
theory that helped Yau in proving the Calabi Conjecture.

\section{Elliptic PDE Theory}
The theory of Partial Differential Equations is enormously varied, yet a
consistent strategy employed in solving both quasilinear and nonlinear PDE's has
been to prove that solutions to the PDE must satisfy certain $\textit{a priori}$
estimates. When one assumes that a solution to a PDE lies in a specific function
class one is in fact imposing a regularity condition on solutions to the PDE.
Proving an $\textit{a priori}$ estimate amounts to showing that under the basic
assumption that a solution to a PDE belongs to a given function class (i.e. has
sufficient regularity) that we can in fact find a uniform bound for all
solutions of that function class. This of course says nothing about the actual
existence of a solution to a particular PDE.  
\\

To prove the existence of a solution to a quasilinear or nonlinear PDE, one
employs fixed point methods or continuity arguments that links the nonlinear PDE
under consideration to a linear PDE for which one can show the existence of
sufficiently regular solutions. In the previous section it was demonstrated
that a solution to the Calabi Conjecture would follow from proving the existence
of a smooth solution to a nonlinear elliptic PDE of Monge-Amp\`{e}re type. Much
of the theory we discuss in this section will help us understand the structure
of Yau's argument and help to prove existence and uniqueness of a smooth
solution. 
\\

Let us recall the general Second Order Linear Elliptic PDE: 
\\
\begin{align}
\sum_{i,j}a_{ij}\partial_{ij}u + \sum_{i}b_{i}\partial_{i}u + cu = f.
\end{align}
\\
We rewrite this as $Lu = f$ where $L$ is a Second Order Linear Elliptic
Operator. Ellipticity follows from the fact that the coefficient matrix
$[a_{ij}]$ is positive definite in the domain of the respective arguments. For
what follows we assume that we are working in an open domain $\Omega \subseteq
\mathbb{R}^{n}$. We also assume that $c \leq 0$. Furthermore we assume that our
operator is $\textit{Uniformly Elliptic}$: $\forall x \in \Omega, \xi \in
\mathbb{R}^{n}, \lambda > 0$. 
\\
\begin{align}
\sum_{i,j}a_{ij}(x) \xi_{i}\xi_{j} \geqq \lambda \left|\xi\right|^{2}.
\end{align}
\\
We start by defining some function classes that will be of importance in our
analysis:
\begin{definition}
A function $f$ is uniformly H\"{o}lder Continuous with exponent $0 < \alpha
< 1$ if $$\left[f\right]_{\alpha;\Omega} = \sup_{\stackrel{x,y\in \Omega}{x \ne
y}} \frac{\left|f(x) - f(y)\right|}{\left|x - y\right|^{\alpha}} < \infty.$$ 
\end{definition}
We define the $H\ddot{o}lder$ $Spaces$ $C^{k,\alpha}(\bar{\Omega})$
($C^{k,\alpha}({\Omega}$)) to be the function space consisting of functions
whose $k^{th}$ order partial derivatives are uniformly  H\"{o}lder
Continuous. Note: $C^{k,0}(\bar{\Omega}) = C^{k}(\bar{\Omega}) $
($C^{k,0}({\Omega}) = C^{k}(\Omega))$ where $C^{k}(\Omega)$
($C^{k}(\bar{\Omega})$) is the space of continuous functions whose $k$th order
partial derivatives are continuous (up to the boundary). We define the following
norms: 
\\
\begin{align}
\|u\|_{C^{k}(\bar{\Omega})}             &= \sum_{j=0}^{k}
\left[D^{j}u\right]_{0; \bar{\Omega}}, \\
\|u\|_{C^{k,\alpha}(\bar{\Omega})} &= \|u\|_{C^{k}(\bar{\Omega})}  +
[D^{k}f]_{\alpha;\bar{\Omega}}.
 \end{align}
 \\

To understand the estimates below it is important to make a comment in regards
to how one goes about obtaining them. One begins by first finding  $\textit{a
priori}$  estimates on suitable subdomains $\Omega^{'} \subseteq \subseteq
\Omega$. Since one can go about establishing a Maximum Principle for Linear
Elliptic PDEs one will find that solutions to such PDEs will have their
maximum on the boundary $\partial\Omega$. Hence \textit{if the domain $\Omega$
has sufficiently smooth boundary and the solution to our PDE is sufficiently
regular, one can extend these interior estimates to the boundary and obtain
global estimates.}
\begin{definition}
A bounded domain $\Omega \subset \mathbb{R}^{n}$ and its boundary are of class
$C^{k,\alpha}$ if at each point $x_{0} \in \partial\Omega$ there is a ball $B =
B(x_{0})$ and a one-to-one mapping $\psi$ of $B$ onto $D\subset \mathbb{R}^{n}$
such that: 
\\
(i) $\psi(B \cap \Omega)$, 
\\
(ii) $\psi(B \cap \partial \Omega) \subset \partial \mathbb{R}^{n}_{+}$,
\\
(iii) $\psi \in C^{k,\alpha}(B), \psi^{-1} \in C^{k,\alpha}(D)$.
\end{definition}

One must refer to Potential Theory to establish existence and uniqueness of
solutions for the Laplace and Poisson Equation. But we mention that one can
prove the unique existence of a $C^{2,\alpha}(\bar{\Omega}$) solution for the
Poisson Equation $\Delta u = f$ when $f \in C^{0,\alpha}(\bar{\Omega})$ . In the
course of this problem one in fact obtains an  $\textit{a priori}$ estimate for
this unique solution: 
\\
\begin{align}
\|u\|_{C^{2,\alpha}(\bar{\Omega})}  \leqq  C\left(\sup_{\Omega} \left|u\right| +
\|f\|_{C^{0,\alpha}(\bar{\Omega})}\right) \textrm{ where } C = C(n,\alpha).
\end{align}

The Poisson Equation is a Linear Elliptic PDE with constant coefficients, yet
one can show that the estimate above
can be extended to the case of the general Linear Second Order Elliptic PDE with
variable coefficients. In fact one can show even more: that if the coefficients
of the general Second Order Linear PDE are H\"{o}lder Continuous then $\exists$
an $\textit{a priori}$ estimate for $C^{2,\alpha}(\bar{\Omega})$ solutions for
this class of PDE's. \textit{The idea is to treat the equation locally as a
perturbation of constant coefficient equations.} Similar in spirit to the
estimate obtained above for the Poisson Equation, one first considers interior
estimates and then assuming sufficient regularity of our solutions and
sufficient smoothness up to the boundary, one then extends these estimates to
the boundary to obtain global estimates. 
\begin{theorem}
(Schauder Estimate) Let $\Omega$ be a $C^{2,\alpha}$ domain in $\mathbb{R}^{n}$
and let $u \in C^{2,\alpha}(\bar{\Omega})$ be a solution of $Lu = f$ in $\Omega$
 where $f \in C^{0,\alpha}(\bar{\Omega})$ and coefficients of L satisfy for
positive constants $\lambda$, $\Lambda$:
\\

$a_{ij}(x) \xi_{i}\xi_{j} \geqq \lambda \left|\xi\right|^{2}$, $\forall x \in
\Omega, \xi \in \mathbb{R}^{n}, \lambda > 0 \; \textnormal{  (Ellipticity
Condition) }$ 
\\

$[a_{ij}]_{\alpha; \bar{\Omega}}$, $[b_{i}]_{\alpha; \bar{\Omega}}$,
$[c]_{\alpha; \bar{\Omega}}$ $\leq$ $\Lambda \; \textnormal{ (Coefficients are
H\"{o}lder Continuous)}$
\\

Assume further that $\varphi \in C^{2,\alpha}(\bar{\Omega})$ and $u = \varphi$
on $\partial \Omega$. Then,
\\
 $$ \|u\|_{C^{2,\alpha}(\bar{\Omega})}  \leqq  C\left(\|u\|_{0,\bar{\Omega}} +
\|\varphi\|_{2,\alpha;\bar{\Omega}} + \|f\|_{0,\alpha; \bar{\Omega}}\right)$$
\begin{center}
$C = C(n,\alpha, \lambda, \Lambda, \Omega)$
\end{center}
\end{theorem}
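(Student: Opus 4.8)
The plan is to prove the global Schauder estimate by the classical method of \emph{freezing coefficients}: on a small ball the operator $L$ differs from a constant-coefficient operator by an error that is small precisely because the coefficients are H\"older continuous, so the constant-coefficient Poisson estimate quoted above — after an affine change of variables that normalizes the principal part to $\Delta$ — can be bootstrapped to the variable-coefficient case. I would first prove an interior estimate on subdomains $\Omega' \subset\subset \Omega$, then a boundary estimate near $\partial\Omega$ using the $C^{2,\alpha}$ charts $\psi$ from the preceding definition to flatten the boundary, and finally patch the two together using compactness of $\bar\Omega$ and a partition of unity to obtain the stated global bound.

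For the interior estimate, fix $x_0 \in \Omega$ and let $L_0$ be the operator obtained by freezing the leading coefficients $a_{ij}$ at $x_0$ and discarding the lower-order terms, so that $L_0 u = f - (L - L_0)u = f + (a_{ij}(x_0) - a_{ij}(x))\partial_{ij}u - b_i \partial_i u - cu$. An affine change of coordinates turns $L_0$ into $\Delta$, so the Poisson estimate applies on a small ball $B_r(x_0)$. The key point is that $\|(a_{ij}(x_0) - a_{ij}(\cdot))\partial_{ij}u\|_{0,\alpha;B_r}$ carries a factor that tends to $0$ as $r \to 0$ because $[a_{ij}]_\alpha \le \Lambda$, so the $D^2u$ term it contributes to the right-hand side can be \emph{absorbed} into the left-hand side once $r$ is chosen small. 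The lower-order terms $b_i\partial_i u$ and $cu$ are controlled by the interpolation inequalities for H\"older norms (any intermediate norm $\|u\|_{k,\beta}$ is dominated by $\varepsilon\|u\|_{2,\alpha} + C(\varepsilon)\|u\|_0$) and likewise absorbed. Carrying this out cleanly is done most efficiently in the weighted interior norms built from $d_x = \mathrm{dist}(x,\partial\Omega)$, which make the absorption uniform across scales; this bookkeeping is the step I expect to be the main obstacle.

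For the boundary estimate, around $x_0 \in \partial\Omega$ use the chart $\psi$ of class $C^{2,\alpha}$ to map $B(x_0)\cap\Omega$ onto a half-ball in $\mathbb{R}^n_+$. Since $\psi, \psi^{-1} \in C^{2,\alpha}$, the transformed equation is again uniformly elliptic with H\"older-continuous coefficients, with new ellipticity and H\"older constants depending only on $n,\lambda,\Lambda$ and on $\psi$, hence ultimately on $\Omega$ — this is exactly why the hypothesis requires a $C^{2,\alpha}$ domain and why $C$ is permitted to depend on $\Omega$. Subtracting off an extension of the boundary data $\varphi$ into $\Omega$ with $\|\cdot\|_{2,\alpha}$ under control reduces to the case of zero boundary values, and the half-space analogue of the Poisson estimate — obtained by odd reflection across the flat boundary — together with the same freezing-and-absorption argument used in the interior gives a $C^{2,\alpha}$ bound near the boundary in terms of $\|u\|_0$, $\|\varphi\|_{2,\alpha}$ and $\|f\|_{0,\alpha}$.

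Finally, compactness of $\bar\Omega$ yields a finite cover by interior balls (on which the interior estimate holds) and boundary neighborhoods (on which the boundary estimate holds); choosing a subordinate partition of unity and summing the local estimates produces $\|u\|_{C^{2,\alpha}(\bar\Omega)} \leqq C(\|u\|_{0,\bar\Omega} + \|\varphi\|_{2,\alpha;\bar\Omega} + \|f\|_{0,\alpha;\bar\Omega})$ with $C = C(n,\alpha,\lambda,\Lambda,\Omega)$, as claimed. For the underlying Newtonian potential estimates and the interpolation inequalities for H\"older norms we would refer to the standard references on linear elliptic PDE theory.
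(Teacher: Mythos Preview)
Your proposal is correct and follows essentially the same approach that the paper itself outlines: the paper does not give a formal proof of this theorem but, in the paragraphs immediately preceding its statement, sketches precisely your strategy --- treat the variable-coefficient equation locally as a perturbation of a constant-coefficient (Poisson) equation, obtain interior estimates on subdomains $\Omega' \subset\subset \Omega$, and then, using the $C^{2,\alpha}$ regularity of $\partial\Omega$, extend these to global estimates up to the boundary. Your write-up simply fleshes out these steps (the affine normalization of the frozen operator, the H\"older-smallness absorption argument, the boundary flattening via the charts $\psi$, and the final compactness/partition-of-unity patching) in considerably more detail than the paper chooses to provide.
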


We now have an $\textit{a priori}$ estimate for solutions in the class
$C^{2,\alpha}$. What remains to be considered is to show that there in fact does
exist a unique $C^{2,\alpha}$ solution to our second order Linear Elliptic PDE.
To prove existence of a solution we will use functional analytic methods. The
following is known as the \textit{Linear Continuity Method}. 
\begin{theorem}
(Linear Continuity Method) Let $(B_{1}, \|.\|_{B_{1}})$ and $(B_{2},
\|.\|_{B_{2}})$ be Banach Spaces. Let $L_{0}, L_{1}: B_{1} \to B_{2}$ linear and
bounded.  Set $L_{t} = (1-t)L_{0} + tL_{1}$.
\\
Furthermore $\|x\|_{B_{1}} \leqslant C\|L_{t}x\|_{B_{2}}$ $\forall x \in X$,
$\forall t \in [0,1]$.
Then the following are equivalent:
\begin{itemize}
 \item  $L_{0}: B_{1} \to B_{2}$ isomorphism.
\item $L_{1}: B_{1} \to B_{2}$ isomorphism.
\item $ \exists t_{0} \in [0,1]$ such that
$L_{t_{0}}: B_{1} \to B_{2}$ isomorphism. 
\end{itemize}
\end{theorem}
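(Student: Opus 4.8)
The plan is to extract from the uniform a priori bound $\|x\|_{B_1} \le C\|L_t x\|_{B_2}$ the structural facts that each $L_t$ is injective with closed range, and then to run a connectedness argument on the parameter set of those $t$ for which $L_t$ is onto. First I would observe that if $L_t x_n$ converges in $B_2$ then $(x_n)$ is Cauchy in $B_1$ by the estimate, hence converges to some $x$, and by continuity of $L_t$ we get $L_t x = \lim_n L_t x_n$; thus $L_t(B_1)$ is closed. Injectivity is immediate, since $L_t x = 0$ forces $\|x\|_{B_1} = 0$. Consequently $L_t \colon B_1 \to B_2$ is a Banach-space isomorphism \emph{if and only if} it is surjective; and when it is surjective it is bijective (injectivity being already in hand), so $L_t^{-1}$ is defined on all of $B_2$, and applying the a priori bound to $x = L_t^{-1}y$ gives $\|L_t^{-1}\|_{\mathrm{op}} \le C$ — with the \emph{same} constant $C$ for every such $t$.

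Next I would set $\Sigma = \{\, t \in [0,1] : L_t \text{ is surjective} \,\}$. Each of the three bulleted conditions asserts $\Sigma \ne \emptyset$: the first is $0 \in \Sigma$, the second is $1 \in \Sigma$, and the third is literally $\Sigma \ne \emptyset$. So it suffices to show that $\Sigma$ is both open and closed in $[0,1]$; then connectedness of $[0,1]$ forces $\Sigma = [0,1]$ as soon as $\Sigma \ne \emptyset$, which in particular puts $0$ and $1$ in $\Sigma$ and thereby chains all three statements together.

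To prove $\Sigma$ is open, fix $s \in \Sigma$, put $N = \|L_1 - L_0\|_{\mathrm{op}}$ (if $N = 0$ then $L_t = L_0$ for all $t$ and there is nothing to prove), and let $\delta = (CN)^{-1}$. For $|t - s| < \delta$ write
\[
L_t = L_s + (t-s)(L_1 - L_0) = L_s\bigl(I + (t-s)\,L_s^{-1}(L_1 - L_0)\bigr).
\]
The operator $(t-s)\,L_s^{-1}(L_1 - L_0)$ has norm at most $|t-s|\,C\,N < 1$, so $I + (t-s)\,L_s^{-1}(L_1 - L_0)$ is invertible via the Neumann series, whence $L_t$ is invertible as a composition of isomorphisms, i.e. $t \in \Sigma$. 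Because $\delta$ does not depend on $s$, the set $\Sigma$ is a union of open intervals of radius $\delta$; this simultaneously makes it closed, since any limit point $t$ of $\Sigma$ lies within $\delta$ of some $s \in \Sigma$ and hence belongs to $\Sigma$.

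The one step that genuinely demands attention is the \emph{uniformity} of the estimate $\|L_t^{-1}\|_{\mathrm{op}} \le C$ across $t \in \Sigma$: without it the admissible step size $\delta$ could degenerate to $0$ somewhere along $[0,1]$ and the open-and-closed argument would collapse. That uniformity is precisely what the hypothesis — a single constant $C$ with $\|x\|_{B_1} \le C\|L_t x\|_{B_2}$ valid for every $t \in [0,1]$ — is there to guarantee, so the argument closes.
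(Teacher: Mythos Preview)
Your proof is correct and follows essentially the same strategy as the paper: both exploit the uniform a priori bound to obtain a uniform step size and then propagate invertibility across $[0,1]$. The only cosmetic differences are that the paper casts the local step as a Banach fixed-point argument for the map $T(x)=L_s^{-1}y+(t-s)L_s^{-1}(L_0-L_1)x$ and then iterates in steps of fixed length, whereas you invoke the Neumann series for $I+(t-s)L_s^{-1}(L_1-L_0)$ and package the iteration as an open-and-closed connectedness argument.
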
   
\begin{proof}
Assume $L_{s}$ is onto for some $s \in [0,1]$. By the bound in the assumption we
have that $L_{s}$ is also injective hence $L_{s}$ is an isomorphism. 
\\
$\Rightarrow$ $L^{-1}_{s} : B_{2} \to B_{1}$ exists.
\\
$\Rightarrow$ $\forall y \in B_{2}$ $\exists x \in B_{1}$ s.t $y = L_{s}x$.
\\
$\Rightarrow$ $\|L^{-1}_{s}y\|_{B_{1}} = \|x\|_{B_{1}}  \leqslant
C\|L_{s}x\|_{B_{2}} = C\|y\|_{B_{2}}$
\\

Let $t\in [0,1]$. Now we know that:
\begin{align}
  L_{t}x = y &\iff L_{s}x = (L_{s} - L_{t})x + y = y + (t-s) L_{0}x -
(t-s)L_{1}x.
\\ &\iff x = L^{-1}_{s}y + (t-s)L^{-1}_{s}(L_{0} - L_{1})x.
\end{align}

Define map $T: X \to X$ such that  $T(x) = L^{-1}_{s}y + (t-s)L^{-1}_{s}(L_{0} -
L_{1})x$.
\\
We have a string of inequalities: 
\begin{align}\|Tx - Tx^{'}\|_{B_{1}} &\leqslant 
\left|t-s\right| \|L^{-1}_{s}(L_{0} - L_{1}) (x -x')\|_{B_{1}} \\ 
&\leqslant \left|t-s\right| C (\|L_{0}\| + \|L_{1}\|)\|x -x'\|_{B_{1}}
\end{align}

If we choose $t$ close enough to $s$ we see that from the last inequality that
the map $T$ is contractive. In particular if $ \left|t-s\right| \leqslant
\frac{1}{2 (\|L_{0}\| + \|L_{1}\|)} =: \delta$ we see that:
\\

$\|Tx - Tx^{'}\|_{B_{1}} \leqslant \frac{1}{2}\|x -x'\|_{B_{1}}$.   
\\

Hence by the Banach Fixed Point Theorem:
\\

If $\left|t-s\right| \leqslant \delta$ then $\exists !$ fixed point for $T$ 
$\Rightarrow$ $L_{t}$ is onto $\forall t$, $\left|t-s\right| \leqslant \delta$.
\\
Since our map is surjective in a uniform neighborhood we can iterate our
argument to show surjectivity for $L_{t}$ $\forall t \in [0,1]$. 
\end{proof}

We now establish a Maximum Principle for solutions of 2nd Order Linear Elliptic
PDE. This will allows us to successfully apply the Linear Continuity Method and
prove existence of a unique $C^{2,\alpha}$ solution.
\begin{lemma}
(Maximum Principle) Assume $Lu = f$ in bounded domain $\Omega \subseteq
\mathbb{R}^{n}$. Suppose $u \in C^{0}(\bar{\Omega}) \cap C^{2}(\Omega)$, $c \leq
0$, $a_{ij} \geq \lambda\mathbb{I}$, $b_{i} \in L^{\infty}$. Then,
$$\sup_{\Omega} u \leq \sup_{\partial \Omega} u^{+} + \frac{C_{0}\sup_{\Omega}
f^{-}}{\lambda}$$
\begin{center}
Where $u^{+}$ is positive part of function u, $f^{-}$ is negative part of
function f, and $C_{0} = C_{0}(\Omega, \lambda, \|b\|_{L^{\infty}})$.
\end{center}
\end{lemma}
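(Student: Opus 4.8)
The plan is to reduce the statement to the special case $c \equiv 0$, establish a weak maximum principle in that case by an interior-maximum argument, and then absorb the inhomogeneity $f$ by comparison with an explicit exponential barrier. Write $L_0 = \sum_{i,j} a_{ij}\partial_{ij} + \sum_i b_i\partial_i$ for the operator obtained from $L$ by discarding the zeroth-order term, so $Lu = L_0 u + cu$. First I would reduce to $c = 0$: if the open set $\Omega^{+} = \{x \in \Omega : u(x) > 0\}$ is empty then $u \le 0$ on $\Omega$ and the bound is trivial, so assume $\Omega^{+}\neq\emptyset$. On $\Omega^{+}$ one has $cu \le 0$ (since $c \le 0$ and $u > 0$), hence $L_0 u = Lu - cu \ge f \ge -\sup_\Omega f^{-}$ there. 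Thus it suffices to treat $L_0$ on the bounded open set $\Omega^{+}$, keeping in mind that $\partial\Omega^{+} \subseteq \partial\Omega \cup \{u = 0\}$: on the first piece $u = u^{+} \le \sup_{\partial\Omega} u^{+}$, and on the second $u = 0 \le \sup_{\partial\Omega} u^{+}$, so in either case the boundary values of $u$ on $\partial\Omega^{+}$ do not exceed $\sup_{\partial\Omega} u^{+}$.

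Next I would prove the weak maximum principle for $L_0$: if $v \in C^{0}(\overline{D})\cap C^{2}(D)$ on a bounded open $D$ satisfies $L_0 v > 0$ in $D$, then $v$ cannot attain an interior maximum, because at such a point $\nabla v = 0$ and the Hessian $D^{2}v$ is negative semidefinite, so since $[a_{ij}] \succeq \lambda\mathbb{I} \succ 0$ we get $\sum_{i,j} a_{ij}\partial_{ij}v = \mathrm{tr}\!\left([a_{ij}]\,D^{2}v\right) \le 0$, whence $L_0 v \le 0$, a contradiction. For the non-strict case $L_0 v \ge 0$ one applies this to $v_\varepsilon = v + \varepsilon e^{\gamma x_1}$ (after translating so that $x_1 > 0$ on $\overline{D}$, with $\gamma$ chosen as in the next step), for which $L_0 v_\varepsilon \ge \varepsilon\lambda > 0$; hence $\sup_{\overline{D}} v_\varepsilon = \sup_{\partial D} v_\varepsilon$, and letting $\varepsilon \to 0$ gives $\sup_{\overline{D}} v = \sup_{\partial D} v$.

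Then I would build the barrier. Translate so that $\overline\Omega \subset \{0 < x_1 < d\}$ for some $d > 0$ depending only on $\Omega$, and set $w = e^{\gamma x_1}$, so that $L_0 w = (\gamma^{2} a_{11} + \gamma b_1)\,w \ge \left(\gamma^{2}\lambda - \gamma\|b\|_{L^\infty}\right) w$. Choosing $\gamma = 1 + \|b\|_{L^\infty}/\lambda$ makes the bracket $\ge \lambda$, so $L_0 w \ge \lambda$ on $\Omega^{+}$. With $v = u + \frac{1}{\lambda}\big(\sup_\Omega f^{-}\big)\,e^{\gamma x_1}$ we then get $L_0 v \ge -\sup_\Omega f^{-} + \sup_\Omega f^{-} = 0$ on $\Omega^{+}$, so by the weak maximum principle $\sup_{\Omega^{+}} u \le \sup_{\overline{\Omega^{+}}} v = \sup_{\partial\Omega^{+}} v$. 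Since $0 < e^{\gamma x_1} \le e^{\gamma d}$ on $\overline\Omega$ and the boundary values of $u$ on $\partial\Omega^{+}$ are $\le \sup_{\partial\Omega} u^{+}$, this yields $\sup_{\Omega^{+}} u \le \sup_{\partial\Omega} u^{+} + \frac{e^{\gamma d}}{\lambda}\sup_\Omega f^{-}$; as $u \le 0$ off $\Omega^{+}$, the same bound holds on all of $\Omega$, giving the claim with $C_0 = e^{\gamma d} = C_0(\Omega,\lambda,\|b\|_{L^\infty})$.

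The work is mostly bookkeeping, but two points need genuine care. One is the linear-algebra fact that $\mathrm{tr}(AB) \le 0$ whenever $A \succeq 0$ and $B \preceq 0$, used at the interior maximum; this is proved by writing $A = P^{\top}P$ and noting $\mathrm{tr}(AB) = \mathrm{tr}(P B P^{\top})$ is the trace of a negative semidefinite matrix. The other, which I expect to be the main obstacle, is the correct identification of $\partial\Omega^{+}$ and the verification that $v$ is controlled by $\sup_{\partial\Omega} u^{+} + \frac{e^{\gamma d}}{\lambda}\sup_\Omega f^{-}$ on \emph{both} pieces of that boundary — in particular using $\sup_{\partial\Omega} u^{+}\ge 0$ to handle the piece $\{u = 0\}$ — since the bound is false if one is careless about which boundary $u^{+}$ refers to.
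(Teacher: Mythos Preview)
Your argument is correct and uses the same two ingredients as the paper's proof: the exponential barrier $e^{\gamma x_1}$ in a direction where $\Omega$ is bounded, and the second-order test $\mathrm{tr}([a_{ij}]\,D^2 w)\le 0$ at an interior maximum. The only real difference is in how the zeroth-order term is handled: the paper keeps $c$ throughout, builds a \emph{nonnegative} supersolution $v = \sup_{\partial\Omega}u^{+} + (e^{\alpha d}-e^{\alpha x_1})\frac{\sup_\Omega f^{-}+\varepsilon}{\lambda}$ so that $cv\le 0$ and hence $Lv\le L_0 v$, and then argues by contradiction at an interior extremum of $v-u$; you instead discard $c$ at the outset by restricting to $\Omega^{+}=\{u>0\}$, prove an abstract weak maximum principle for $L_0$, and add the barrier to $u$. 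Your packaging is a bit cleaner in that it separates the comparison principle from the barrier construction, at the cost of having to track the two pieces of $\partial\Omega^{+}$ (which you do correctly); the paper's version avoids that bookkeeping but needs the $\varepsilon$-perturbation to make the contradiction strict.
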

\begin{proof}
Let $L_{0}$ = $a_{ij} \partial_{ij} + b_{i} \partial_{i}$
\\
We translate and rotate $\Omega$ such that in the $x_{1}$ direction it is
bounded between $[0,d]$.
\\
We know that $L_{0}e^{\alpha x_{1}} = (a_{11}\alpha^{2} + b_{1}\alpha)e^{\alpha
x_{1}}$
$\geqslant (\lambda \alpha^{2} - \|b\|_{L^{\infty}} \alpha) e^{\alpha x_{1}}
\geqslant \lambda$ if $\alpha > > 1$. 
\\

Set $v := \sup_{\partial \Omega} u^{+} + (e^{\alpha d} - e^{\alpha
x_{1}}) \frac{\sup_{\Omega} f^{-} + \varepsilon}{\lambda} \geqslant 0$
\\
Now $L_{0}v = -\lambda \frac{\sup_{\Omega} f^{-} + \varepsilon}{\lambda}
\leqslant - \sup_{\Omega} f^{-} - \varepsilon.$
\\

Since $v \geqslant 0$ and $c \leqslant 0$, $Lv = L_{0}v + cv \leqslant L_{0}v$. 
\\
$\Rightarrow$  $L(v-u) \leqslant -\lambda \left(\frac{\sup_{\Omega} f^{-} +
\varepsilon}{\lambda}\right) \leqslant 0$ and $v-u \geqslant 0$ on $\partial
\Omega$.
\\

We want to show that our functions $u$ and $v$ never cross. 
\\
Let $x_{0}$ be a maximum point of $v-u$. Assume by contradiction that
$(v-u)(x_{0}) < 0$. 
\\

$\Rightarrow$ $L(v-u)(x_{0}) = a_{ij} \partial_{ij} (v-u)(x_{0}) + b_{i}
\partial_{i}(v-u)(x_{0}) + c(v-u)(x_{0}) \geqslant 0$
\\
But $L(v-u)(x_{0}) \leqslant 0$. Hence we have derived a contradiction.
\\
$\Rightarrow$ In $\Omega$: $u \leqslant v$. Let $\varepsilon \to 0$ to obtain
bound.  
\end{proof}

With the help of the Maximum Principle and the Linear Continuity Method we prove
the existence of a unique $C^{2,\alpha}$ solution for 2nd Order Linear Elliptic
PDE.
\begin{theorem}
(Schauder) Assume $\Omega \subseteq \mathbb{R}^{n}$ and its boundary $\partial
\Omega$ is of class $C^{2,\alpha}$. Suppose also that $a_{ij}$, $b_{i}$, $c$
$\in C^{0,\alpha}(\Omega)$ ; $f\in C^{0,\alpha}(\Omega)$ ;  $a_{ij} \geq
\lambda\mathbb{I}$, $b_{i} \in L^{\infty}$,and $c \leq 0$. Then, $\forall
\varphi \in C^{2,\alpha}(\Omega)$ $\exists!$ $u \in C^{2,\alpha}(\Omega)$
solving the Boundary Value Problem:
\begin{center}
$Lu = f$ in $\Omega$
\end{center}
\begin{center}
$u = \varphi$ on $\partial\Omega$
\end{center}
\end{theorem}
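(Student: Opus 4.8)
The plan is to prove existence by the method of continuity, deforming the operator $L$ to the Laplacian, and to extract uniqueness from the Maximum Principle. First I would reduce to homogeneous boundary data: given $\varphi \in C^{2,\alpha}(\bar{\Omega})$, set $w = u - \varphi$, so that $w = 0$ on $\partial\Omega$ and $Lw = f - L\varphi =: \tilde{f}$. Since $\varphi \in C^{2,\alpha}(\bar{\Omega})$, the coefficients of $L$ lie in $C^{0,\alpha}(\bar{\Omega})$, and $C^{0,\alpha}(\bar{\Omega})$ is a Banach algebra, we get $\tilde{f} \in C^{0,\alpha}(\bar{\Omega})$. Thus it suffices to solve $Lw = \tilde{f}$ with $w|_{\partial\Omega} = 0$.

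Next I would set up the Banach spaces for the Linear Continuity Method: take $B_1 = \{v \in C^{2,\alpha}(\bar{\Omega}) : v|_{\partial\Omega} = 0\}$ with the $C^{2,\alpha}$-norm and $B_2 = C^{0,\alpha}(\bar{\Omega})$ with the $C^{0,\alpha}$-norm. Define $L_0 = \Delta$, $L_1 = L$, and $L_t = (1-t)\Delta + tL$. Each $L_t : B_1 \to B_2$ is linear and bounded (again because $C^{0,\alpha}$ is an algebra and the coefficients are in $C^{0,\alpha}$), each $L_t$ is uniformly elliptic with ellipticity constant bounded below by $\min(1,\lambda)$, has bounded first-order coefficients with $L^\infty$-norm controlled by $\|b\|_{L^\infty}$, and has zeroth-order coefficient $tc \leq 0$; moreover the $C^{0,\alpha}$-seminorms of the coefficients of $L_t$ are bounded by $\max(1,\Lambda)$, uniformly in $t$.

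Then I would establish the bound required by the Linear Continuity Method, namely $\|v\|_{B_1} \leq C\|L_t v\|_{B_2}$ uniformly in $t \in [0,1]$. The Schauder Estimate applied to $L_t$ with zero boundary data gives $\|v\|_{C^{2,\alpha}(\bar{\Omega})} \leq C_1\left(\|v\|_{0;\bar{\Omega}} + \|L_t v\|_{0,\alpha;\bar{\Omega}}\right)$ with $C_1 = C_1(n,\alpha,\min(1,\lambda),\max(1,\Lambda),\Omega)$, hence independent of $t$. To absorb $\|v\|_{0;\bar{\Omega}}$, I would apply the Maximum Principle to $v$ and to $-v$: since the zeroth-order coefficient of $L_t$ is $\leq 0$ and $v = 0$ on $\partial\Omega$, this yields $\sup_\Omega|v| \leq (C_0/\lambda')\sup_\Omega|L_t v| \leq C_2\|L_t v\|_{0,\alpha;\bar{\Omega}}$, again uniformly in $t$. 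Combining, $\|v\|_{C^{2,\alpha}(\bar{\Omega})} \leq C\|L_t v\|_{0,\alpha;\bar{\Omega}}$ with $C$ independent of $t$. For the base point $t=0$, the cited potential-theoretic solvability of the Dirichlet problem for the Poisson equation $\Delta v = \tilde{f}$ (with zero boundary data) shows $L_0 : B_1 \to B_2$ is an isomorphism. The Linear Continuity Method then forces $L_1 = L : B_1 \to B_2$ to be an isomorphism, so there is a unique $w \in C^{2,\alpha}(\bar{\Omega})$ with $w|_{\partial\Omega} = 0$ and $Lw = \tilde{f}$; setting $u = w + \varphi$ solves the stated boundary value problem. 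Uniqueness for the original problem also follows directly from the Maximum Principle: if $u_1,u_2$ both solve it then $L(u_1-u_2)=0$ and $u_1-u_2=0$ on $\partial\Omega$, so applying the Maximum Principle to $\pm(u_1-u_2)$ gives $u_1 = u_2$.

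I expect the main obstacle to be verifying that the constant in the a priori estimate is genuinely \emph{uniform} in $t$ — that is, confirming that the constant $C(n,\alpha,\lambda,\Lambda,\Omega)$ in the Schauder Estimate depends on the operator only through its ellipticity constant and the Hölder bounds on its coefficients, both of which have been arranged to be controlled uniformly along the family $L_t$, and likewise that the constant $C_0$ in the Maximum Principle depends only on $\Omega$, $\lambda$, and $\|b\|_{L^\infty}$. The only other point needing care is that $L_t$ maps $B_1$ boundedly into $B_2$, which rests on $C^{0,\alpha}(\bar{\Omega})$ being closed under multiplication; everything else — the maximum-principle absorption, the base case, and the final bookkeeping — is routine given the results already established.
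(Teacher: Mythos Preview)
Your proposal is correct and follows essentially the same approach as the paper: reduce to zero boundary data, interpolate between $\Delta$ and $L$ via $L_t = (1-t)\Delta + tL$, combine the Schauder estimate with the Maximum Principle to get the uniform bound $\|v\|_{C^{2,\alpha}} \leq C\|L_t v\|_{C^{0,\alpha}}$, and then invoke solvability for the Laplacian as the base case of the Linear Continuity Method. Your write-up is in fact more careful than the paper's on the points you flag as obstacles (uniformity of constants in $t$, boundedness of $L_t: B_1 \to B_2$, and the explicit uniqueness argument), but the underlying strategy is identical.
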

\begin{proof} Without loss of generality we assume that $\varphi = 0$. Otherwise
we can replace u with $v = u-\varphi$. Define $L_{t} = (1-t) \Delta + tL$. 
\\
Note that  $L_{t}: X \to Y$, where $X = \{u \in C^{2,\alpha}(\Omega) | u = 0 \;
\text{on} \; \partial\Omega \}$ and $Y = \{C^{0,\alpha}(\Omega)\}$   
\\

By the maximum principle $\forall w \in X$ $\|w\|_{L^{\infty}(\Omega)} \leqslant
C\|L_{t}w\|_{L^{\infty}(\Omega)}$.
\\

Our $C^{2,\alpha}$ $\textit{a priori}$ estimate implies that:
\\$\|w\|_{C^{2,\alpha}(\bar{\Omega})}  \leqq  C\left(\sup_{\Omega}
\left|w\right| + \|f\|_{C^{0,\alpha}(\bar{\Omega})}\right) \leqslant
C\|L_{t}w\|_{C^{0,\alpha}(\Omega)} $.  
\\
$\Rightarrow$ $\|w\|_{X}  \leqq C\|L_{t}w\|_{Y}$ $\forall w \in X$, $\forall t
\in [0,1]$.
\\
We know that $L_{0}$ is simply the Laplacian operator and we stated that there
exists a unique $C^{2,\alpha}$ solution for this operator. Hence by the
continuity method $L_{1}$ is an isomorphism.\end{proof}

In order to successfully study the Complex Monge Amp\`{e}re Equation we have to
also consider Nonlinear Elliptic PDE theory. Let us recall the General
Second-Order Nonlinear Elliptic PDE on domain $\Omega \subseteq \mathbb{R}^{n}$.
Our nonlinear operator will be a real function defined on $\Gamma = \Omega
\times \mathbb{R} \times \mathbb{R}^{n} \times \mathbb{R}^{n\times n}$: 
\\
\begin{align}
F[u] = F(x,u,Du, D^{2}u) = 0.
\end{align}
\begin{center}
A typical point $\gamma \in \Gamma$ will be indexed by $\gamma = (x,z,p,r)$.
\end{center}
\begin{definition}
F is elliptic in $\mathfrak{A} \subseteq \Gamma$ if this matrix
$[F_{ij}(\gamma)]$ given by $F_{ij}(\gamma) = \frac{\partial F}{\partial
r_{ij}}(\gamma) > 0$ $\forall \gamma \in \mathfrak{A} $. Furthermore let
$\lambda(\gamma)$ and $\Lambda(\gamma)$ be the minimum and maximum eigenvalues
of matrix $[F_{ij}(\gamma)]$. F is uniformly elliptic if
$\frac{\Lambda}{\lambda} < \infty$.
\end{definition}
To show the existence of a solution to a Nonlinear Elliptic PDE one can use a
nonlinear version of the Continuity Method. We will now develop this idea
further. We assume the reader is familiar with Fr\'{e}chet Differentiability for
operators on a Banach Space. 
\begin{theorem}
(Implicit Function Theorem for Banach Spaces) Let $B_{1}, X, B_{2}$ be Banach
Spaces and suppose that $G: B_{1} \times X \to B_{2}$ is a $C^{1}$ mapping
defined at least in a neighborhood of a point $(u_{0}, \sigma_{0})$. Denote by
$y_{0}$ the image of $G(u_{0}, \sigma_{0})$. Suppose $D_{x}G(u_{0}, \sigma_{0})$
is an isomorphism. Then $\exists$ open sets $W \subseteq B_{1}$, $U \subseteq
X$, $V\subseteq B_{2}$ with $u_{0} \in W$, $\sigma_{0} \in U$ and $y_{0} \in V$
and a unique $C^{1}$ mapping $g: W \times V \to U$ such that $$G(u,g(u,y)) =
y$$ 
\begin{center}
$\forall (u,y) \in W \times V.$
\end{center}
\end{theorem}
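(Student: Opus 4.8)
The plan is to reduce the solvability of $G(u,\sigma)=y$ for $\sigma$ to a fixed point problem and then apply the Banach Fixed Point Theorem, exactly as in the proof of the Linear Continuity Method above. Write $L := D_x G(u_0,\sigma_0)$, which is by hypothesis an isomorphism of $X$ onto $B_2$, and for $(u,y)$ near $(u_0,y_0)$ define
\[
T_{u,y}(\sigma) := \sigma - L^{-1}\bigl(G(u,\sigma) - y\bigr).
\]
A point $\sigma$ is fixed by $T_{u,y}$ if and only if $G(u,\sigma)=y$. First I would show that $T_{u,y}$ is a $\tfrac12$-contraction on a small closed ball $\bar B(\sigma_0,r)\subseteq X$, uniformly in $(u,y)$: since $G$ is $C^1$, $D_\sigma T_{u,y}(\sigma) = I - L^{-1}D_x G(u,\sigma)$ vanishes at $(u_0,\sigma_0)$, hence has operator norm $\le \tfrac12$ for $(u,\sigma)$ in a neighborhood of $(u_0,\sigma_0)$ by continuity of $D_xG$; the mean value inequality on Banach spaces then yields the contraction bound. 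Shrinking the neighborhoods of $u_0$ and $y_0$ if necessary, continuity of $G$ makes $\|T_{u,y}(\sigma_0)-\sigma_0\| = \|L^{-1}(G(u,\sigma_0)-y)\| \le r/2$, so $T_{u,y}$ maps $\bar B(\sigma_0,r)$ into itself.

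Next, the Banach Fixed Point Theorem produces, for each $(u,y)$ in the chosen neighborhoods $W\times V$, a unique fixed point $\sigma = g(u,y)\in U := B(\sigma_0,r)$, and $G(u,g(u,y))=y$ by construction; uniqueness of the map $g$ into $U$ is immediate. Continuity of $g$ follows from the uniform contraction: subtracting the two fixed point equations and using that $T_{u,y}$ is jointly Lipschitz in $(u,y)$ (again because $G$ is $C^1$, hence locally Lipschitz) gives, after absorbing the $\tfrac12$ term, an estimate of the form $\|g(u,y)-g(u',y')\| \le C\bigl(\|u-u'\| + \|y-y'\|\bigr)$.

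Finally, I would upgrade $g$ from continuous to $C^1$. The invertible operators form an open subset of $\mathcal L(X,B_2)$ on which inversion is continuous, so after shrinking $W\times V$ once more, $D_xG(u,g(u,y))$ stays invertible. The candidate derivatives come from formally differentiating the identity $G(u,g(u,y))=y$:
\[
D_u g(u,y) = -\bigl[D_xG(u,g(u,y))\bigr]^{-1} D_u G(u,g(u,y)), \qquad D_y g(u,y) = \bigl[D_xG(u,g(u,y))\bigr]^{-1}.
\]
I would then verify that these are genuinely the Fr\'echet derivatives by expanding $G\bigl(u+h,\, g(u+h,y+k)\bigr) - G(u,g(u,y)) = k$ to first order, using the $C^1$ expansion of $G$ together with the continuity of $g$ already established, and checking that the remainder is $o(\|h\|+\|k\|)$; continuity of the derivatives is then inherited from continuity of $DG$, of $g$, and of operator inversion. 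The main obstacle is precisely this last step: the fixed point argument delivers existence, uniqueness, and continuity of $g$ almost for free, but the $C^1$ claim requires the careful remainder estimate showing that the formally differentiated expression really is the Fr\'echet derivative of $g$.
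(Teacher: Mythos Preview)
Your argument is the standard and correct one: reduce $G(u,\sigma)=y$ to a fixed point of $T_{u,y}(\sigma)=\sigma-L^{-1}(G(u,\sigma)-y)$, use continuity of $D_xG$ to make $T_{u,y}$ a uniform $\tfrac12$-contraction on a small ball, obtain existence, uniqueness, and Lipschitz continuity of $g$ from the Banach Fixed Point Theorem, and then verify the formally differentiated candidates are the actual Fr\'echet derivatives. The outline is sound, including your honest flagging of the remainder estimate in the $C^1$ step as the point requiring care.

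There is nothing to compare it against, however: the paper states the Implicit Function Theorem for Banach Spaces without proof and moves directly to applying it (``In order to apply the Implicit Function Theorem to nonlinear PDE Theory we assume\ldots''). So your proposal goes strictly beyond what the paper does. If anything, your write-up is in the same spirit as the paper's treatment of the Linear Continuity Method, which it does prove by the same contraction-mapping mechanism you invoke here.
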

In order to apply the Implicit Function Theorem to nonlinear PDE Theory we
assume that F is a mapping from an open subset $\mathfrak{A} \subseteq B_{1}$
into $B_{2}$. Let $\psi$ be a fixed element in $\mathfrak{A}$ and define for $u
\in \mathfrak{A}$, $t \in \mathbb{R}$ the mapping $G: \mathfrak{A} \times
\mathbb{R} \to B_{2}$ where $$G[u,t] = F[u] - tF[\psi].$$
Define $S  \subseteq [0,1]$ such that:

$$S = \{t \in [0,1] \; |\; G[u,t] = 0\; \textnormal{for some}\;u \in
\mathfrak{A} \}.$$

\textit{Note:} $S \neq \varnothing$ since $t = 1 \in S$. If we further assume
that the map $F$ is $C^{1}$ it follows from the Implicit Function Theorem that
the set $S$ is open. If we can show that the set $S$ is also closed then by
connectivity of the set $[0,1]$, $S = [0,1]$. Hence in particular there exists a
$u \in \mathfrak{A}$ such that $F[u] = 0$. This is the solution to our Nonlinear
PDE. As we will see when we apply these ideas to the Complex Monge Amp\`{e}re
Equation, closure of the set $S$ will follow from an $\textit{a priori}$
estimate in some function space and the application of the Arzela-Ascoli
Theorem. Hence just as in the Linear case we need to establish $\textit{a
priori}$ estimates for solutions with sufficient regularity. Once these
estimates have been shown a simple application of the Continuity Method gives us
a solution to the Nonlinear PDE. As a precursor for what is to come, we mention
that the establishment of $\textit{a priori}$ estimates for $\varphi$ is Yau's
primary task. He then successfully applies a variant of the Nonlinear Continuity
Method to prove existence of a solution to the nonlinear elliptic PDE under
consideration. Hence most of the labor Yau undertakes is in establishing
$\textit{a priori}$ estimates for solutions of the Complex Monge Amp\`{e}re
Equation.

\section{Proof of Calabi Conjecture}
In this section we present a proof of the Calabi Conjecture. Before providing
the details of the proof let us take a moment to mention how our results of the
previous section generalize to general compact manifolds.  The interior
estimates we obtained were for general open domains in $\mathbb{R}^{n}$. To
transfer these estimates onto a compact manifold we simply use our compactness
assumption to find a finite open cover for our manifold. Since each set in this
cover is diffeomorphic to an open set in $\mathbb{R}^{n}$, one can simply use
coordinate transformations to transfer the estimate onto our manifold.
\\

A tool that will be useful in our computations is the fact that around each
point of our K\"{a}hler Manifold one can find a coordinate system which can
simultaneously diagonalize the K\"{a}hler Metric $g_{ij}$ and the Hessian of
$\varphi$. The utility of this representation can hardly be overestimated.  
More specifically:
\begin{theorem} (Existence of Holomorphic Normal Coordinates)
Let $(M,\omega)$ be a K\"{a}hler Manifold. In local coordinates $\omega = 
\sum_{i,j} g_{i\bar{j}} dz^i \wedge dz^{\bar{j}}$. At each $z_{0} \in M$
holomorphic normal coordinates can be introduced i.e.
\\ 
(i) $g_{i\bar{j}}(z_{0}) = \delta_{ij}$ 
\\
(ii) $\partial_{k}g_{i\bar{j}}(z_{0}) = \partial_{\bar{k}}g_{i\bar{j}}(z_{0}) =
0$
\\
(iii) $g'_{i\bar{j}}(z_{0}) = \left(1 +
\varphi_{i\bar{i}}\right)\delta_{i\bar{j}}$ where $g'_{i\bar{j}} = 
g_{i\bar{j}}+\frac{\partial^2 \varphi}{\partial z^i \partial z^{\bar{j}}}$ 
\end{theorem}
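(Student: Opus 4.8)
The plan is to build the normal coordinates at $z_{0}$ by composing three successive holomorphic coordinate changes centered at $z_{0}$ (after a translation, take $z_{0}=0$): a complex-linear one to arrange (i), a holomorphic \emph{quadratic} one to arrange (ii), and a unitary-linear one to arrange (iii) without spoiling the first two. First I would use that $g_{i\bar{j}}(0)$ is Hermitian positive definite to find $A\in GL(n,\mathbb{C})$ with $A^{*}g(0)A=\mathbb{I}$; passing to coordinates $z=Aw$ gives $g_{i\bar{j}}(0)=\delta_{i\bar{j}}$, which is (i). Since this map is linear, it transforms all derivatives of $g$ at $0$ tensorially, so nothing later is obstructed by it.

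Next I would look for a quadratic holomorphic change $w^{i}=z^{i}+\tfrac{1}{2}a^{i}{}_{kl}z^{k}z^{l}$ with $a^{i}{}_{kl}=a^{i}{}_{lk}$, so that $\partial w^{i}/\partial z^{k}(0)=\delta^{i}_{k}$ and $\partial_{m}(\partial w^{i}/\partial z^{k})(0)=a^{i}{}_{km}$. Differentiating the transformation law $g_{k\bar{l}}=\tilde{g}_{i\bar{j}}\,(\partial w^{i}/\partial z^{k})\,\overline{(\partial w^{j}/\partial z^{l})}$ at $0$ and using (i) yields
\begin{equation}
\partial_{m}g_{k\bar{l}}(0)=\partial_{m}\tilde{g}_{k\bar{l}}(0)+a^{l}{}_{km},
\end{equation}
so setting $a^{l}{}_{km}:=-\partial_{m}g_{k\bar{l}}(0)$ removes the holomorphic first derivatives of the new metric at $0$; the antiholomorphic ones vanish automatically since $\tilde{g}_{k\bar{l}}=\overline{\tilde{g}_{l\bar{k}}}$ forces $\partial_{\bar{m}}\tilde{g}_{k\bar{l}}=\overline{\partial_{m}\tilde{g}_{l\bar{k}}}$, giving (ii). The one thing that has to be checked is that this choice is admissible, i.e.\ that $a^{l}{}_{km}$ is symmetric in $k$ and $m$; and this is precisely the K\"{a}hler identity $\partial_{m}g_{k\bar{l}}=\partial_{k}g_{m\bar{l}}$ already recorded in the previous section. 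Because the quadratic map has identity linear part at $0$, property (i) survives.

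Finally, at $0$ the Hessian $\varphi_{i\bar{j}}(0)$ is a Hermitian matrix, so I would diagonalize it by a unitary $U$ and pass to $w=U^{*}z$; this is linear with $U^{*}\mathbb{I}U=\mathbb{I}$, so it keeps $g_{i\bar{j}}(0)=\delta_{i\bar{j}}$ and keeps the first derivatives of $g$ zero at $0$ (equivalently, one simultaneously diagonalizes the pair $\delta_{i\bar{j}}$, $\varphi_{i\bar{j}}(0)$, which is possible because the first member is the identity). In these coordinates $\varphi_{i\bar{j}}(0)=\varphi_{i\bar{i}}\,\delta_{i\bar{j}}$, hence $g'_{i\bar{j}}(0)=g_{i\bar{j}}(0)+\varphi_{i\bar{j}}(0)=(1+\varphi_{i\bar{i}})\,\delta_{i\bar{j}}$, which is (iii).

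The hard part is the interplay in the middle step: one must recognize that the symmetry a holomorphic quadratic coordinate change is forced to possess matches exactly the symmetry $\partial_{m}g_{k\bar{l}}=\partial_{k}g_{m\bar{l}}$ coming from $d\omega=0$ — without the K\"{a}hler condition the first derivatives of $g$ simply cannot be killed by any holomorphic change of coordinates. Everything else is linear algebra: reducing a positive-definite Hermitian form to the identity, then diagonalizing a Hermitian form by a unitary change that fixes the identity, plus routine bookkeeping with the transformation law.
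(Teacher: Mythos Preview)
Your argument is correct and is the standard construction of holomorphic normal coordinates on a K\"{a}hler manifold, carried out in the expected three stages (complex-linear, holomorphic quadratic, then unitary). The paper, however, does not prove this theorem at all: it is stated as a computational tool immediately before Yau's second- and third-order estimates, with no accompanying argument. So there is no approach in the paper to compare yours against; you have supplied precisely what the paper omits, and your identification of the K\"{a}hler identity $\partial_{m}g_{k\bar{l}}=\partial_{k}g_{m\bar{l}}$ as the exact obstruction-killer in the quadratic step is the heart of the matter.

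One inconsequential slip: from your own displayed relation $\partial_{m}g_{k\bar{l}}(0)=\partial_{m}\tilde{g}_{k\bar{l}}(0)+a^{l}{}_{km}$ the choice that annihilates $\partial_{m}\tilde{g}_{k\bar{l}}(0)$ is $a^{l}{}_{km}=+\,\partial_{m}g_{k\bar{l}}(0)$, not the negative. This does not affect anything substantive --- the required symmetry $a^{l}{}_{km}=a^{l}{}_{mk}$ still comes directly from $d\omega=0$, and the rest of the construction goes through unchanged.
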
  

We start by stating the main estimates which are used to provide a solution to
the Calabi Conjecture.
\\
\begin{theorem} (Yau's Second Order Estimates) Let M be a compact K\"{a}hler
Manifold with metric tensor 2 $\sum_{i,j} g_{i\bar{j}} dz^i \otimes
dz^{\bar{j}}$. Let $\varphi$ be a real valued function in $C^{4}(M)$ such that
$\int_M \varphi = 0$ and $\sum_{i,j} g_{i\bar{j}}+\frac{\partial^2
\varphi}{\partial z^i \partial z^{\bar{j}}} dz^i \otimes dz^{\bar{j}}$ defines
another metric tensor on M. Suppose $\det\left(g_{i\bar{j}}+\frac{\partial^2
\varphi}{\partial z^i \partial z^{\bar{j}}}\right) =
\exp{(F)}\det(g_{i\bar{j}})$. Then there exist positive constants $C_{1}$,
$C_{2}$, $C_{3}$, and $C_{4}$ depending on $\inf_{M}F$, $\sup_{M}F$, $\inf
\Delta F$, and M such that $\sup_{M} \left|\varphi \right| \leqslant C_{1}$,
$\sup_{M} \left|\triangledown \varphi \right| \leqslant C_{2}$, $0 < C_{3}
\leqslant 1 + \varphi_{i\bar{i}} \leqslant C_{4}$ for all i.         
\end{theorem}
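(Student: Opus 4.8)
The plan is to establish the three a priori bounds in the order $C^{0}$, then the Laplacian (second order) bound $0<C_{3}\le 1+\varphi_{i\bar{i}}\le C_{4}$, and finally the gradient bound, the last being deduced from the first two. Write $\omega=\sum g_{i\bar{j}}\,dz^{i}\wedge dz^{\bar{j}}$, $\omega'=\omega+i\partial\overline{\partial}\varphi$, and $g'_{i\bar{j}}=g_{i\bar{j}}+\partial_{i}\partial_{\bar{j}}\varphi$; by the positivity lemma of Section~1 the form $\omega'$ is a positive $(1,1)$ form, so $g'$ is a genuine K\"{a}hler metric and the Monge--Amp\`{e}re hypothesis reads $\omega'^{\,n}=e^{F}\omega^{n}$. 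Two elementary facts will be used throughout: the normalization $\int_{M}\varphi=0$, and the algebraic identity $\omega'^{\,n}-\omega^{n}=i\partial\overline{\partial}\varphi\wedge T$ with $T=\sum_{k=0}^{n-1}\omega'^{\,k}\wedge\omega^{\,n-1-k}$ a closed positive $(n-1,n-1)$ form satisfying $T\ge\omega^{\,n-1}$.

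For the $C^{0}$ bound I would run Moser iteration. From $\omega'^{\,n}=e^{F}\omega^{n}$ and the identity above, $(e^{F}-1)\omega^{n}=i\partial\overline{\partial}\varphi\wedge T$. Testing against $|\varphi|^{p-2}\varphi$ for $p\ge 2$ and integrating by parts over the closed manifold $M$ (using $dT=0$ and $\partial T=\overline{\partial}T=0$ by type), the positivity $T\ge\omega^{\,n-1}$ produces a good gradient term and yields $c(p-1)\int_{M}|\varphi|^{p-2}|\nabla\varphi|^{2}_{g}\,\omega^{n}\le(1+e^{\sup F})\int_{M}|\varphi|^{p-1}\omega^{n}$. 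Since $|\varphi|^{p-2}|\nabla\varphi|^{2}$ is comparable to $p^{-2}|\nabla(|\varphi|^{p/2})|^{2}$, the Sobolev inequality on $(M,g)$ converts a bound on $\|\varphi\|_{L^{p}}$ into one on $\|\varphi\|_{L^{\beta p}}$ with $\beta=n/(n-1)>1$; a base $L^{2}$ bound comes from the case $p=2$ together with the Poincar\'{e} inequality, which is available precisely because $\int_{M}\varphi=0$. Iterating $p\mapsto\beta p$ and tracking the constants in the usual way gives $\sup_{M}|\varphi|\le C_{1}$ with $C_{1}=C_{1}(\sup F,M)$.

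The Laplacian estimate is the crux, and I expect its proof to be the main obstacle. Following Aubin and Yau, apply the maximum principle to $Q=\log(n+\Delta_{g}\varphi)-\lambda\varphi$ with $\lambda$ large; $Q$ is well defined because $n+\Delta_{g}\varphi=\operatorname{tr}_{g}\omega'>0$. The essential input is the Aubin--Yau inequality: differentiating $\log\det(g'_{i\bar{j}})=F+\log\det(g_{i\bar{j}})$ twice and commuting covariant derivatives, one shows that at any point, in the holomorphic normal coordinates of the preceding theorem (so $g_{i\bar{j}}=\delta_{ij}$, $\partial g_{i\bar{j}}=0$, $g'_{i\bar{j}}=(1+\varphi_{i\bar{i}})\delta_{ij}$),
\[ \Delta_{g'}\bigl(\log(n+\Delta_{g}\varphi)\bigr)\ \ge\ \frac{\Delta_{g}F}{\,n+\Delta_{g}\varphi\,}\ -\ c\sum_{i}\frac{1}{1+\varphi_{i\bar{i}}}, \]
where $c$ depends only on a lower bound for the bisectional curvature of $(M,g)$, hence only on $M$. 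The delicate point is the third-order terms $\varphi_{i\bar{j}k}$ arising from the second differentiation: one must verify that, because one differentiates $\log(n+\Delta_{g}\varphi)$ rather than $n+\Delta_{g}\varphi$ itself, these terms reassemble — via a Cauchy--Schwarz estimate against the term $|\nabla_{g'}(n+\Delta_{g}\varphi)|^{2}/(n+\Delta_{g}\varphi)^{2}$ coming from the logarithm — into a nonnegative quantity and may be discarded. Granting this, the maximum principle closes the argument: using $\Delta_{g'}\varphi=n-\sum_{i}(1+\varphi_{i\bar{i}})^{-1}$, at a maximum point $p$ of $Q$ one has $0\ge\Delta_{g'}Q$, which with $\lambda=c+1$ and the AM--GM bound $n+\Delta_{g}\varphi\ge n\,e^{F/n}\ge n\,e^{(\inf F)/n}$ forces $\sum_{i}(1+\varphi_{i\bar{i}})^{-1}(p)\le C'$ for some $C'=C'(\inf F,\inf\Delta F,M)$. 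Hence $1+\varphi_{i\bar{i}}(p)\ge 1/C'$ for all $i$, so $\prod_{i}(1+\varphi_{i\bar{i}})=e^{F}$ gives $(n+\Delta_{g}\varphi)(p)\le n\,e^{\sup F}(C')^{\,n-1}$; since $Q$ is maximal at $p$, for all $x\in M$ we get $\log(n+\Delta_{g}\varphi)(x)=Q(x)+\lambda\varphi(x)\le Q(p)+\lambda C_{1}$, and the $C^{0}$ bound then yields $1+\varphi_{i\bar{i}}\le n+\Delta_{g}\varphi\le C_{4}$ everywhere. The lower bound $1+\varphi_{k\bar{k}}=e^{F}\big/\prod_{i\ne k}(1+\varphi_{i\bar{i}})\ge e^{\inf F}/C_{4}^{\,n-1}=:C_{3}>0$ follows from the equation.

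Finally, the gradient bound follows from the previous two. The Laplacian estimate makes $\Delta_{g}\varphi$ uniformly bounded on $M$, so standard interior $L^{p}$ elliptic estimates, applied in a finite coordinate atlas and patched by compactness (with $\int_{M}\varphi=0$ fixing the additive constant), bound $\|\varphi\|_{W^{2,p}(M)}$ for every finite $p$; taking $p>2n$, the Sobolev embedding $W^{2,p}\hookrightarrow C^{1,\alpha}$ gives $\|\varphi\|_{C^{1,\alpha}(M)}\le C_{2}$, and in particular $\sup_{M}|\nabla\varphi|\le C_{2}$. (Yau's original route instead applies the maximum principle directly to $(1+|\nabla\varphi|^{2}_{g})\,e^{-\lambda\varphi}$ for a suitable $\lambda$, again invoking the $C^{0}$ bound.) All constants produced in this way depend only on $\inf F$, $\sup F$, $\inf\Delta F$ and $M$, as asserted.
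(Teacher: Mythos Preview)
The paper does not prove this theorem. It is one of the two ``main estimates'' that are \emph{stated} at the start of Section~4 (together with Yau's Third Order Estimate) and then used as black-box input to the continuity-method argument; the proof is implicitly deferred to Yau's original paper [SY]. So there is no proof in the paper against which to compare your attempt.

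That said, your sketch is a faithful outline of the standard Aubin--Yau argument and is essentially correct. The $C^{0}$ bound via Moser iteration on the identity $(e^{F}-1)\omega^{n}=i\partial\overline{\partial}\varphi\wedge T$ with $T\ge\omega^{n-1}$ is Yau's original route; the Laplacian estimate via the maximum principle applied to $\log(n+\Delta_{g}\varphi)-\lambda\varphi$, with $\lambda$ chosen to beat the bisectional-curvature constant, is exactly the Aubin--Yau computation, and you correctly identify the cancellation of third-order terms (by Cauchy--Schwarz against the logarithm's gradient contribution) as the delicate point. Your derivation of $C_{3}$ from $C_{4}$ via the determinant equation is clean. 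For the gradient bound, either route you mention works; note that the $L^{p}$-elliptic approach, while valid, is slightly anachronistic relative to Yau's paper, which does the direct maximum-principle argument on an auxiliary function. The dependencies of the constants on $\inf F$, $\sup F$, $\inf\Delta F$ and the background geometry track correctly through your outline.
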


\begin{theorem} (Yau's Third Order Estimate) Let M be a compact K\"{a}hler
Manifold with metric tensor $\sum_{i,j} g_{i\bar{j}} dz^i \otimes dz^{\bar{j}}$.
Let $\varphi$ be a real valued function in $C^{5}(M)$ such that $\int_M \varphi
= 0$ and $\sum_{i,j} g_{i\bar{j}}+\frac{\partial^2 \varphi}{\partial z^i
\partial z^{\bar{j}}} dz^i \otimes dz^{\bar{j}}$ defines another metric tensor
on M. Suppose $\det\left(g_{i\bar{j}}+\frac{\partial^2 \varphi}{\partial z^i
\partial z^{\bar{j}}}\right) = \exp{(F)}\det(g_{i\bar{j}})$. Then there is an
estimate of the derivatives $\varphi_{i\bar{j}k}$ in terms of $\sum_{i,j}
g_{i\bar{j}} dz^i \otimes dz^{\bar{j}}$, $\sup \left|F\right|$, $\sup
\left|\triangledown F\right|$, $\sup_{M} \sup_{i} \left|F_{i\bar{i}}\right|$ and
$\sup_{M} \sup_{i,j,k} \left|F_{i\bar{j}k}\right|$.
\end{theorem}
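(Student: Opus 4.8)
The plan is to follow Calabi's classical route to a third-order bound, carried out relative to the new metric $g'_{i\bar j} := g_{i\bar j} + \varphi_{i\bar j}$, which is a K\"{a}hler metric by the positivity Lemma at the end of Section~1 together with the Monge--Amp\`ere equation. Let $\nabla$ be the Levi--Civita connection of $g$, write $\Gamma$, $\Gamma'$ for the Christoffel symbols of $g$, $g'$, let $\nabla'$ be the covariant derivative of $g'$, and let $\Delta_{g'}u := g'^{i\bar j} u_{i\bar j}$ denote the complex Laplacian of $g'$. The tensor that records the third derivatives is the difference of the two connections,
\[
\Lambda_{ij}{}^{k} \;:=\; \Gamma'_{ij}{}^{k} - \Gamma_{ij}{}^{k} \;=\; g'^{k\bar l}\,\nabla_i g'_{j\bar l} \;=\; g'^{k\bar l}\,\varphi_{ij\bar l},
\]
and I would study the scalar
\[
S \;:=\; g'^{i\bar p}\,g'^{j\bar q}\,g'_{k\bar r}\;\Lambda_{ij}{}^{k}\,\overline{\Lambda_{pq}{}^{r}}.
\]
Since $\Lambda$ is, up to the metric factors $g'^{k\bar l}$ and comparison with the fixed $g$, just the collection of the derivatives $\varphi_{i\bar j k}$, a bound $S\le C$ together with the two-sided bounds $0<C_3\le 1+\varphi_{i\bar i}\le C_4$ supplied by Yau's Second Order Estimates immediately produces the desired pointwise estimate on each $\varphi_{i\bar j k}$. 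So the entire task reduces to bounding $S$ from above.

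The first and hardest step is to derive Calabi's identity for $\Delta_{g'}S$. Fixing an arbitrary $z_0\in M$ and computing in the holomorphic normal coordinates of the Holomorphic Normal Coordinates theorem (so that at $z_0$ one has $g_{i\bar j}=\delta_{ij}$, all first derivatives of $g$ vanish, and $g'_{i\bar j}=(1+\varphi_{i\bar i})\delta_{i\bar j}$), I would differentiate $S$ twice: commuting covariant derivatives brings in curvature terms of the \emph{fixed} metric $g$, while differentiating the Monge--Amp\`ere equation written as $\log\det g'_{i\bar j}=F+\log\det g_{i\bar j}$ twice shows that the traces of $\Lambda$ and of $\nabla'\Lambda$ are controlled by $F$ and its derivatives up to third order. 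The outcome should be a differential inequality of the schematic shape
\[
\Delta_{g'}S \;\ge\; \varepsilon_0\,\bigl|\nabla'\Lambda\bigr|^2_{g'} \;-\; K_1\,S \;-\; K_2\sqrt{S}\;-\;K_3,
\]
with $\varepsilon_0>0$ and $K_1,K_2,K_3$ depending only on $(M,g)$, on the eigenvalue bounds $C_3,C_4$, and on $\sup_M|F|$, $\sup_M|\nabla F|$, $\sup_M\sup_i|F_{i\bar i}|$ and $\sup_M\sup_{i,j,k}|F_{i\bar j k}|$. Carrying out this tensor calculation — keeping honest track of which curvature and $F$-derivative terms arise, and verifying that the genuinely cubic good term $\varepsilon_0|\nabla'\Lambda|^2$ is not destroyed when the mixed terms are absorbed via Cauchy--Schwarz — is the main obstacle and the bulk of the labor.

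The inequality above cannot be fed directly into the maximum principle because of the term $-K_1 S$. The standard remedy is that the computation behind the Second Order Estimates in fact yields a companion lower bound of the form $\Delta_{g'}(n+\Delta\varphi)\ge \varepsilon_1 S - K_4$, since the ``good'' term in the Aubin--Yau estimate for $\operatorname{tr}_g g'=n+\Delta\varphi$ dominates a positive multiple of $S$. Setting $A:=(K_1+1)/\varepsilon_1$ and $G:=S+A\,(n+\Delta\varphi)$, discarding the nonnegative term $\varepsilon_0|\nabla'\Lambda|^2$, using $S-K_2\sqrt S\ge-\tfrac14K_2^2$, and invoking the upper bound $n+\Delta\varphi\le nC_4$ to replace $S$ by $G$, one obtains $\Delta_{g'}G \ge \tfrac12 G - K_5$. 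Since $M$ is compact, $G$ attains its maximum at some $x_0$, where $\Delta_{g'}G(x_0)\le0$; hence $G(x_0)\le 2K_5$, so $G\le 2K_5$ and therefore $S\le 2K_5$ on all of $M$. Undoing the definition of $S$ and using once more the pinching $C_3\le 1+\varphi_{i\bar i}\le C_4$ gives $\sup_M\sup_{i,j,k}|\varphi_{i\bar j k}|\le C$ with $C$ of the stated dependence. (If convenient, $S$ may be replaced throughout by the equivalent quantity $g^{i\bar p}g^{j\bar q}g^{k\bar r}\,\varphi_{i\bar r j}\,\overline{\varphi_{p\bar q k}}$ built from the fixed metric $g$, which differs from $S$ only by factors pinched between $C_3$ and $C_4$.)
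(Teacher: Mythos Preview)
The paper does not actually prove this theorem: it \emph{states} Yau's Third Order Estimate (together with the Second Order Estimates) as an input, and then devotes the remainder of Section~4 to showing how these a~priori bounds feed into the continuity method via Schauder theory and bootstrapping. There is no computation of $\Delta_{g'}S$, no maximum-principle argument for the third derivatives, and no derivation of the constants; the estimate is quoted from Yau~[SY].

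Your proposal, by contrast, is a genuine (if high-level) outline of the Calabi--Yau proof itself: the quantity $S$ built from the connection difference $\Lambda = \Gamma'-\Gamma$, the differential inequality for $\Delta_{g'}S$ obtained by repeatedly differentiating the Monge--Amp\`ere equation, the coupling with $n+\Delta\varphi$ to absorb the bad $-K_1 S$ term, and the maximum principle on the combination $G = S + A(n+\Delta\varphi)$. This is exactly the standard route in [SY] and in Joyce~[DJ], so your strategy is correct and indeed goes well beyond what the paper itself supplies. The one caution is that your write-up treats the key inequality $\Delta_{g'}S \ge \varepsilon_0|\nabla'\Lambda|^2 - K_1 S - K_2\sqrt{S} - K_3$ as a black box (``the main obstacle and the bulk of the labor''); if you intend this as a self-contained proof rather than a sketch, that computation must actually be carried out, since verifying that the cubic good term survives the Cauchy--Schwarz absorptions is precisely where the argument lives.
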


We consider
\\
\begin{align}
\det\left(g_{i\bar{j}}+\frac{\partial^2 \varphi}{\partial z^i \partial
z^{\bar{j}}}\right) = C\exp{(F)}\det(g_{i\bar{j}})
\end{align} 
\\
Recall our assumptions: $\varphi \in C^5(M)$, F $\in C^{k}(M)$ for $k \geq 3$
and $\int_M \exp{ \{F\}} = \textrm{Vol}(M)$. We set our constant $C = 1$. Our
goal is to show the existence of a unique $C^{\infty}$ function $\varphi$
solving the Monge Amp\`{e}re equation and  satisfying the compatibility
condition $\int_M \varphi = 0$ . Recall that we have already established that if
such a smooth function $\varphi$ does exist then $\tilde{g} = \sum_{i,j}
g_{i\bar{j}}+\frac{\partial^2 \varphi}{\partial z^i \partial z^{\bar{j}}} dz^i
\otimes dz^{\bar{j}}$ indeed defines a K\"{a}hler metric on our manifold.
Moreover we showed that the metric is unique. Hence establishing the existence
of a smooth solution to the Complex Monge Amp\`{e}re equation will lead us to a
solution of the Calabi Conjecture. We now demonstrate how Yau's estimates and an
application of a variant of the continuity method will lead us to a solution of
the problem. 
\\

In the first step we show that under the stated assumptions we can find a
solution $\varphi \in C^{k+1, \alpha}(M)$ for any $0 \leq \alpha < 1$ to the
Complex Monge Amp\`{e}re Equation. Consider the set:
$$\mathbf{S} = \{t \in [0,1] \; |\; \textnormal{the equation} \;
\det\left(g_{i\bar{j}}+\frac{\partial^2 \varphi}{\partial z^i \partial
z^{\bar{j}}}\right) \det(g_{i\bar{j}})^{-1} =$$ $$\textrm{Vol}(M)\left[\int_M
\exp{ \{tF\}}\right]^{-1}\exp{(tF)} \; \textnormal{has a solution in} \;
C^{k+1,\alpha}(M) \}$$
\begin{center}
Note: $0 \in \mathbf{S} \Rightarrow \mathbf{S} \neq \varnothing$.
\end{center}
If we can show that $\mathbf{S}$ is both open and closed this will imply that $S
= [0,1]$. Hence $\Rightarrow$ $1 \in \mathbf{S} \Rightarrow$ our equation has a
solution in $C^{k+1, \alpha}(M)$.  This is an application of the Nonlinear
Continuity Method. Define the following sets: 
\\
\begin{align}
\mathbf{\Theta} = \{\varphi \in C^{k+1,\alpha}(M) \;|\; 1 + \varphi_{i\bar{i}} >
0 \; \forall i \; \textnormal{and} \; \int_M \varphi = 0\}
\end{align}  
\begin{align}
\mathbf{B} = \{f \in C^{k-1,\alpha}(M) \;|\; \int_M f = \textnormal{Vol(M)}\}
\end{align}
\\
We note that $\mathbf{\Theta} \subseteq C^{k+1, \alpha}$ open and $\mathbf{B}
\subseteq C^{k-1,\alpha}$ is a hyperplane. Furthermore $C^{k+1, \alpha}$ and
$C^{k-1, \alpha}$ are Banach Spaces. 
\\

We define the Monge Amp\`{e}re map $G: \mathbf{\Theta} \to \mathbf{B}$:
\\
\begin{align}
G(\varphi) =  \det\left(g_{i\bar{j}}+\frac{\partial^2 \varphi}{\partial z^i
\partial z^{\bar{j}}}\right)(\det(g_{i\bar{j}})^{-1})
\end{align}
\\
This is a nonlinear Map between Banach Spaces. We compute its Fr\'{e}chet
Derivative. 
We let $A$ denote the matrix $\left(g_{i\bar{j}}+\frac{\partial^2
\varphi}{\partial z^i \partial z^{\bar{j}}}\right)$ and $X$ the matrix
$\left(\frac{\partial^2 \psi}{\partial z^i \partial z^{\bar{j}}}\right)$ where
$\psi \in \mathbf{\Theta}$.  We recall a fact from linear algebra that to first
order the derivative of the determinant of an invertible matrix is given by the
trace:
\\
\begin{align}
\det(A + \varepsilon X) - \det(A) = \det(A)\textnormal{Tr}(A^{-1}X)\varepsilon
+ o(\varepsilon^{2})
\end{align}
\\

Furthermore on a K\"{a}hler Manifold the Laplace-Beltrami Operator is locally
represented by:
\begin{align}
\Delta = -\sum_{i,j} g^{i\bar{j}}\frac{\partial^2}{\partial z^i \partial
z^{\bar{j}}}
\end{align}
Where $(g^{i\bar{j}})$ represents the inverse matrix of the metric coefficient
matrix $g_{i\bar{j}}$.
\\

Hence the differential of $G$ at the point $\varphi_{0}$ is given by:
\\
\begin{align}
G'(\varphi_{0}) = \det\left(g_{i\bar{j}}+\frac{\partial^2 \varphi_{0}}{\partial
z^i \partial z^{\bar{j}}}\right)(\det(g_{i\bar{j}}))^{-1} \Delta_{\varphi_{0}}
\end{align}
Where $G': T_{\psi}\mathbf{\Theta} \to T_{G(\psi)}\mathbf{B}$ is a map between
the respective tangent spaces and  $\Delta_{\varphi_{0}}$ is the
Laplace-Beltrami Operator with respect to the metric
$\left(g_{i\bar{j}}+\frac{\partial^2 \varphi_{0}}{\partial z^i \partial
z^{\bar{j}}}\right)$. Furthermore: 
\\
\begin{align}
T_{\gamma}\mathbf{B} \subseteq \{f \in C^{k-1,\alpha}(M) \;|\; \int_M f = 0\}
\end{align}
\\

We now state a lemma about the Laplace-Beltrami Operator on compact Riemannian
Manifolds.
\begin{lemma} Let $\Delta$ be the Laplace-Beltrami operator on a compact
Riemannain manifold $(M,g)$. Assume $f: M \to \mathbb{R}$ is a smooth function.
Then there exists unique solution (in the weak sense) $u \in \mathcal{H}$ to the
Poisson equation $\Delta u = f$ where $\mathcal{H} = \{u\in H^{1,2}(M) \;|\; 
\int_M u = 0\; \textnormal{and} \;  \int_M fu = 1 \} \iff \int_M f = 0$.
\end{lemma}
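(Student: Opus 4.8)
The plan is to solve the Poisson equation by Hilbert space methods on the codimension-one subspace of $H^{1,2}(M)$ consisting of functions with vanishing mean, and then record the normalization separately. First I would settle necessity: if $u \in H^{1,2}(M)$ is a weak solution, i.e. $\int_M \langle \nabla u, \nabla v\rangle\, dV = \int_M f v\, dV$ for every $v \in H^{1,2}(M)$ (using the positive sign convention $\Delta = -\,\mathrm{div}\,\mathrm{grad}$ from the excerpt), then inserting the admissible test function $v \equiv 1$ --- admissible because $M$ is compact --- gives $\int_M f\, dV = 0$ at once. Hence $\int_M f = 0$ is forced.

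For sufficiency, assume $\int_M f\, dV = 0$ and put $\mathcal{H}_0 = \{ u \in H^{1,2}(M) : \int_M u\, dV = 0\}$. The one substantive ingredient is the Poincar\'e inequality $\|u\|_{L^2(M)} \le C \|\nabla u\|_{L^2(M)}$ on $\mathcal{H}_0$, which I would obtain by a compactness--contradiction argument from the Rellich--Kondrachov theorem, which gives a compact embedding $H^{1,2}(M) \hookrightarrow L^2(M)$ since $M$ is compact. This inequality makes the Dirichlet form $a(u,v) = \int_M \langle \nabla u, \nabla v\rangle\, dV$ an inner product on $\mathcal{H}_0$ whose norm is equivalent to the $H^{1,2}$-norm, so $(\mathcal{H}_0, a)$ is a Hilbert space; it also shows $v \mapsto \int_M f v\, dV$ is a bounded linear functional there, since $|\int_M f v\, dV| \le \|f\|_{L^2}\|v\|_{L^2} \le C\|f\|_{L^2}\, a(v,v)^{1/2}$. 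By the Riesz representation theorem there is a unique $u \in \mathcal{H}_0$ with $a(u,v) = \int_M f v\, dV$ for all $v \in \mathcal{H}_0$, and this is in fact a weak solution against all of $H^{1,2}(M)$: any $\phi \in H^{1,2}(M)$ splits as $\phi = c + \psi$ with $c = \frac{1}{\mathrm{Vol}(M)}\int_M \phi\, dV$ and $\psi \in \mathcal{H}_0$, whence $a(u,\phi) = a(u,\psi) = \int_M f\psi\, dV = \int_M f\phi\, dV$, the last step using $\int_M f = 0$. Thus $\Delta u = f$ in the weak sense.

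Uniqueness in the zero-mean class is immediate: a difference $w$ of two solutions satisfies $a(w,v) = 0$ for all $v \in \mathcal{H}_0$, so $a(w,w) = 0$, hence $w$ is constant and then zero by the mean constraint. For the normalization $\int_M f u\, dV = 1$ in the statement of $\mathcal{H}$, pairing $\Delta u = f$ with $u$ gives $\int_M f u\, dV = a(u,u) = \|\nabla u\|_{L^2}^2 \ge 0$, strictly positive exactly when $f \not\equiv 0$, so the value $1$ is reached after rescaling $f$ (and hence $u$) by the appropriate positive constant; this is a harmless normalization rather than an extra equation. The main --- indeed the only nonformal --- obstacle is the Poincar\'e inequality, equivalently the coercivity of $a$ on $\mathcal{H}_0$, which is exactly where compactness of $M$ genuinely enters. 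If smoothness of $u$ is wanted downstream, interior elliptic regularity for $\Delta$ in coordinate charts together with a bootstrap using $f \in C^\infty(M)$ upgrades the weak solution to an element of $C^\infty(M)$.
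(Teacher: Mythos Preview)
The paper states this lemma without proof --- it is quoted as a known fact and then immediately applied to conclude that the linearized Monge--Amp\`ere operator is an isomorphism on mean-zero functions. There is therefore no proof in the paper to compare against.

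Your argument is the standard one and is sound: test against constants for necessity; for sufficiency, use the Poincar\'e inequality on the mean-zero subspace (obtained via Rellich--Kondrachov and a compactness--contradiction argument) to make the Dirichlet form coercive, apply Riesz representation, and then extend the weak formulation from $\mathcal{H}_0$ to all of $H^{1,2}(M)$ using $\int_M f = 0$. Uniqueness via coercivity is clean. This is exactly the content the paper needs downstream.

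You are also right to flag the constraint $\int_M f u = 1$ in the definition of $\mathcal{H}$ as a normalization rather than a genuine extra condition. For a fixed $f$, the unique mean-zero weak solution automatically satisfies $\int_M f u = \|\nabla u\|_{L^2}^2$, a determined positive number when $f \not\equiv 0$, not $1$ in general; so as literally stated the lemma cannot hold for arbitrary $f$. The clause appears to be a vestige of a Rayleigh-quotient variational formulation (minimize Dirichlet energy subject to $\int_M f u = 1$, whose Euler--Lagrange equation is $\Delta u = \lambda f$), not a condition compatible with $\Delta u = f$ as written. What the paper actually uses is only the bijectivity of $\Delta$ on mean-zero functions, which your proof delivers.
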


This lemma implies that the Laplace Beltrami Operator is a bijection on the
space of mean zero functions. Hence the condition we need to ensure that:
\\
\begin{align}
\det\left(g_{i\bar{j}}+\frac{\partial^2 \varphi_{0}}{\partial z^i \partial
z^{\bar{j}}}\right)(\det(g_{i\bar{j}}))^{-1} \Delta_{\varphi_{0}}\varphi = f
\end{align}
\\
has a solution is that $\int_M f = 0$. This equation is a Linear Second Order
Elliptic PDE.  Hence by Schauder Theory we know that $u \in C^{k+1, \alpha}$. 
Furthermore by requiring that $\int_M \varphi = 0$ we know that our solution is
unique. Hence the differential of $G, G'$ at $\varphi_{0}$ is an isomorphism. By
the Implicit Function Theorem for Banach Spaces $G$ maps and open neighborhood
of
$\varphi_{0}$ to an open neighborhood of $G(\varphi_{0})$. This shows that
$\mathbf{S}$ is an open set. 
\\

We now show that $\mathbf{S}$ is also a closed set. Let $\{t_{n}\}$ be an
arbitrary sequence in $\mathbf{S}$. This gives rise to a sequence
$\{\varphi_{n}\} \in C^{k+1, \alpha}$ such that:
\\
\begin{align}
\det\left(g_{i\bar{j}}+\frac{\partial^2 \varphi_{n}}{\partial z^i \partial
z^{\bar{j}}}\right) \det(g_{i\bar{j}})^{-1} = \textrm{Vol}(M)\left[\int_M \exp{
\{t_{n}F\}}\right]^{-1} \exp{(t_{n}F)}
\end{align}
\\

Differentiating this equation with respect to $z$ we have:
\\
\begin{align}
\det\left(g_{i\bar{j}}+\frac{\partial^2 \varphi_{n}}{\partial z^i \partial
z^{\bar{j}}}\right) \sum_{i,j} g_{n}^{i\bar{j}}\frac{\partial^2}{\partial z^i
\partial z^{\bar{j}}}\left(\frac{\partial \varphi_{n}}{\partial z^{p}}\right) 
= \textrm{Vol}(M)\left[\int_M \exp{ \{t_{n}F\}}\right]^{-1}
\frac{\partial}{\partial z^{p}}\left[\exp{(t_{n}F)} \det(g_{i\bar{j}})\right]
\end{align}
\\
Where $\left(g_{n}^{i\bar{j}}\right)$ is inverse matrix of
$\left(g_{i\bar{j}}+\frac{\partial^2 \varphi_{n}}{\partial z^i \partial
z^{\bar{j}}}\right)$ for all $n$.
\\

We notice that the left hand side of this equation is a Linear Second Order
Elliptic PDE with variable coefficients. In fact the coefficient matrix
$[a^{n}_{ij}]$ = $[g_{n}^{i\bar{j}}]$ = $[g_{i\bar{j}}+\frac{\partial^2
\varphi_{n}}{\partial z^i \partial z^{\bar{j}}}]$. By using Holomorphic Normal
Coordinates and applying Yau's 2nd Order Estimates we have $0 < C_{3} \leqslant
1 + \varphi_{n_{i\bar{i}}} \leqslant C_{4}$ for all $i$. Hence the eigenvalues
of the inverse matrix are bounded and the left hand side of $(81)$ is uniformly
elliptic. Yau's Third Order Estimates imply that $\varphi_{n} \in C^{2,\alpha}$.
This implies that the coefficients of the operator on the left hand side and the
functions on the right hand side are in fact H\"{o}lder continuous for every
exponent $0 \leqslant \alpha \leqslant 1$. Hence by the Schauder Estimate we
know
that we have $\textit{a priori}$ $C^{2,\alpha}$ estimate for $\frac{\partial
\varphi_{n}}{\partial z^{p}}$ $\forall n$ . Arguing in a similar fashion one
shows that we have $\textit{a priori}$ $C^{2,\alpha}$ estimate for
$\frac{\partial \varphi_{n}}{\partial z^{\bar{p}}}$ $\forall n$.       
\\

Furthermore this implies that $\varphi_{n} \in C^{3,\alpha}$. This gives us
better differentiability properties for the coefficients of our Linear Second
Order Elliptic operator. Appealing to Schauder Estimates we find $\textit{a
priori}$ $C^{3,\alpha}$ estimate for $\frac{\partial \varphi_{n}}{\partial
z^{p}}$ $\forall n$. We bootstrap this argument and iterate to find $C^{k+1,
\alpha}$ estimates for $\varphi_{n}$ $\forall n$ where the constant is
independent of $n$. Hence our sequence $\{\varphi_{n}\} $ is uniformly bounded.
One also readily sees that our functions are equicontinuous. Hence by the
Arzela-Ascoli Theorem our sequence $\{\varphi_{n}\}$ has a convergent
subsequence in $\mathbf{S}$. This implies that our set $\mathbf{S}$ is closed. 
We have now shown the existence of a $\varphi \in C^{k+1, \alpha}(M)$ for any $0
\leq \alpha < 1$ to the Complex Monge Amp\`{e}re Equation.  
\\

We are now in a position to prove the existence of a smooth solution $\varphi$ 
to the Complex Monge Amp\`{e}re Equation. Recall that in our formulation of the
Calabi Conjecture we had a compact complex manifold $M$ and a K\"{a}hler metric
$g$ on $M$ with K\"{a}hler form $\omega$. Furthermore we assumed $F$ to be a
smooth function on $M$. This implies $F \in C^{k}(M)$ $\forall k$. Hence by our
argument above we have that $\varphi \in C^{\infty}$. We have now followed Yau's
footsteps and provided an affirmative answer to the Conjecture of Calabi.

\section{Calabi Yau manifolds and Holonomy}
Let us begin by first defining some necessary differential geometric concepts we
will need, and proceed to mathematically define a Calabi-Yau Manifold and give a
proof of their existence. Since a K\"{a}hler
Manifold is really a Riemannian Manifold with a compatible Complex Structure, we
will
focus our discussion of differential geometric concepts in the context of
Riemannian Geometry.  We assume $(M,g)$ is a Riemannian Manifold with Riemannian
Metric $g$. The notion of a connection is introduced on a Riemannian Manifold so
that one has a suitable notion of differentiating vector fields. In fact it can
be seen as a kind of covariant derivative (a natural generalization of a
directional derivative from vector calculus) on the tangent bundle $TM$.     
\begin{theorem} (Existence of Unique Torsion-Free Connection)
Let $(M,g)$ be a Riemannian Manifold with Riemannian Metric g. Then there exists
a unique, torsion-free Connection $\nabla$ on $TM$ with $\nabla g = 0$ called
the Levi-Civita Connection. 
\end{theorem}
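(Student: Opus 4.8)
The plan is to prove both existence and uniqueness at once via the Koszul formula, working invariantly rather than in coordinates. Recall that a connection $\nabla$ on $TM$ is an $\mathbb{R}$-bilinear operation $(X,Y)\mapsto \nabla_X Y$ on vector fields that is $C^\infty(M)$-linear in $X$ and satisfies the Leibniz rule $\nabla_X(fY)=(Xf)Y+f\nabla_X Y$; it is torsion-free if $\nabla_X Y-\nabla_Y X=[X,Y]$, and compatible with $g$ (i.e. $\nabla g=0$) if $X\,g(Y,Z)=g(\nabla_X Y,Z)+g(Y,\nabla_X Z)$ for all vector fields $X,Y,Z$.

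For uniqueness, I would suppose such a $\nabla$ exists and write out the metric-compatibility identity for the three cyclic orderings of $(X,Y,Z)$. Adding the first two and subtracting the third, and then using torsion-freeness to rewrite each difference $\nabla_A B-\nabla_B A$ as $[A,B]$, all the ``unwanted'' covariant derivatives cancel and one is left with the Koszul formula
\begin{equation}
2\,g(\nabla_X Y,Z)=X\,g(Y,Z)+Y\,g(Z,X)-Z\,g(X,Y)+g([X,Y],Z)-g([Y,Z],X)+g([Z,X],Y).
\end{equation}
Since the right-hand side involves neither $\nabla$ nor any unknown, and since $g$ is nondegenerate, this determines $g(\nabla_X Y,Z)$ for every $Z$ and hence determines $\nabla_X Y$ uniquely. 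This both proves uniqueness and tells us exactly how to define $\nabla$.

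For existence, I would \emph{define} $\nabla_X Y$ to be the unique vector field such that $g(\nabla_X Y,Z)$ equals $\tfrac12$ of the right-hand side above for all $Z$ (this uses nondegeneracy of $g$, i.e. the musical isomorphism $TM\to T^*M$). It then remains to check that this $\nabla$ is genuinely a metric, torsion-free connection. Checking torsion-freeness and metric compatibility is a direct symmetry computation: swapping $X$ and $Y$ in the formula and subtracting gives $2g(\nabla_X Y-\nabla_Y X,Z)=2g([X,Y],Z)$; and adding the formulas for $g(\nabla_X Y,Z)$ and $g(\nabla_X Z,Y)$ collapses the bracket terms and the non-$X$ derivative terms, leaving $2X\,g(Y,Z)$. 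The genuinely fiddly step — the main obstacle, such as it is — is verifying that the object defined by Koszul's formula actually satisfies the axioms of a connection, namely $C^\infty(M)$-linearity in the $X$-slot and the Leibniz rule in the $Y$-slot, since neither is manifest from the formula. These follow by plugging $fX$ (resp. $fY$) into the right-hand side and repeatedly applying the bracket identity $[fX,Y]=f[X,Y]-(Yf)X$ and $[X,fY]=f[X,Y]+(Xf)Y$ together with the derivation property of $X$; the extra terms produced by the brackets conspire with those produced by differentiating $g(\cdot,\cdot)$ to yield precisely $f\cdot(\text{formula})$ in the first case and $f\cdot(\text{formula})+2(Xf)g(Y,Z)$ in the second. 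I would carry out these four verifications in order (tensoriality in $X$, Leibniz in $Y$, torsion-freeness, metric compatibility); each is routine bookkeeping with no conceptual surprise, and together they establish the theorem. As a remark, one could alternatively define $\nabla$ locally through the Christoffel symbols $\Gamma_{\mu\nu}{}^{\rho}=\tfrac12 g^{\rho\sigma}(\partial_\mu g_{\sigma\nu}+\partial_\nu g_{\mu\sigma}-\partial_\sigma g_{\mu\nu})$ recorded earlier and check coordinate-independence, but the invariant argument above avoids that extra step.
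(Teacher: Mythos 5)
Your proposal is correct: it is the standard Koszul-formula proof of the fundamental theorem of Riemannian geometry, and the formula you write down has the correct signs (adding the first two cyclic compatibility identities, subtracting the third, and using torsion-freeness does yield exactly the right-hand side you state; the symmetry checks for torsion-freeness and metric compatibility also come out as you claim). Note, however, that the paper itself states this theorem without any proof — it is quoted as background for the holonomy discussion in the final section — so there is no argument of the authors' to compare yours against. Your invariant route is complete in outline: uniqueness and the definition of $\nabla$ both come from nondegeneracy of $g$ applied to the Koszul formula, and the only steps you defer as routine ($C^{\infty}(M)$-linearity in the $X$-slot and the Leibniz rule in the $Y$-slot) are precisely the right remaining verifications and go through via the bracket identities $[fX,Y]=f[X,Y]-(Yf)X$ and $[X,fY]=f[X,Y]+(Xf)Y$ as you indicate. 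Your closing remark is also apt: the coordinate alternative you mention is exactly the Christoffel-symbol expression $\Gamma_{\mu\nu}{}^{\rho}=\tfrac12 g^{\rho\sigma}\left(\partial_{\mu}g_{\sigma\nu}+\partial_{\nu}g_{\mu\sigma}-\partial_{\sigma}g_{\mu\nu}\right)$ that the paper records in its K\"{a}hler geometry section, and the invariant argument spares you the check of coordinate-independence.
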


We next consider the holonomy of a connection. Intuitively holonomy is a local
representation of the curvature of our space. To understand the global geometry
of an object one can send vectors around closed loops in a space (formal notion
of parallel transport) and quantitatively measure how the initial vector and
final vector differ. This failure to preserve geometric data around closed loops
is what we mean by the holonomy of the connection. More specifically we choose
points $x,y \in M$. Let 
\\
$$ \gamma : [0,1] \to M$$
\\
be a smooth curve with $\gamma(0) = x$ and $\gamma(1) = y$. The connection on
our manifold allows us to transport vectors along this curve so that they remain
parallel with respect to the connection. We define $P_{\gamma}$ to be the
parallel
transport map. It is a linear and invertible map. Parallel transport is a
way to locally move the geometry of our manifold along a curve. Fixing a
point $x \in M$ one defines the holonomy of our connection, 
\begin{equation}
\textrm{Hol}_{x}(\nabla) = \{P_{\gamma} \; | \; \gamma \; \textnormal{is a loop
based at } x\} \subseteq GL(n, \mathbb{R})
\end{equation}
\\
Furthermore one can easily see that $\textrm{Hol}_{x}(\nabla)$ has a group
structure and is independent of the basepoint $x \in M$ ``up to conjugation" if
$M$
is simply-connected: $P_{\gamma} \textrm{Hol}_{x}(\nabla) P^{-1}_{\gamma} = 
\textrm{Hol}_{y}(\nabla)$ for any piecewise smooth map $\gamma : [0,1] \to M$
with $\gamma(0) = x$ and $\gamma(1) = y$.   
\\  

If one considers $x \in M$ and $T_{x}M$ the tangent space at x, one can show
that the constant tensor $g$ i.e. $\nabla g = 0$ is preserved under the action
of
$\textrm{Hol}_{x}(\nabla)$ on $T_{x}M$ (acting on $g|_{x}$). Since $O(n)$ are
the group of transformations of $T_{x}M$ preserving  $g|_{x}$, we know that,
\begin{equation}
\textrm{Hol}_{x}(\nabla) \subseteq O(n)
\end{equation}
The holonomy group of a Riemannian Manifold is referred to as the Riemannian
Holonomy Group.
\\

If one instead considers only null-homotopic curves on our manifold (that is
curves that are homotopic to the constant curve) then one can suitably define
what is known as the Restricted Holonomy Group,
\begin{equation}
\textrm{Hol}^{0}_{x}(\nabla) = \{P_{\gamma} \; | \; \gamma \; \textnormal{is a
null-homotopic loop based at } x\} \subseteq GL(n, \mathbb{R})
\end{equation}
All the properties described above of the Holonomy group carry through. The two
notions of Holonomy are equivalent on simply-connected Manifolds.
\\

There is a classification theorem due to Marcel Berger [MB] that answers the
question
which subgroups of $O(n)$ can be the Holonomy group of some Riemannian Manifold
of dimension $n$. Berger found that for generic Riemannian Manifolds
$\textrm{Hol}(\nabla) = SO(n)$. Riemannian Metrics with $ \textrm{Hol}(\nabla)
\subseteq U(m)$ where $n = 2m$ are called K\"{a}hler Metrics. Riemannian Metrics
with $ \textrm{Hol}(\nabla) \subseteq SU(m)$ where $n = 2m$ are called
Calabi-Yau Metrics. Hence for our purpose we now have a mathematical definition
of a Calabi-Yau Manifold:
\begin{definition} (Calabi-Yau Manifold)
A Calabi-Yau Manifold is a compact K\"{a}hler Manifold $(M,J,g)$ of dimension $m
\geq 2$ with $\textrm{Hol}(\nabla) \subseteq SU(m)$.
\end{definition}
A consequence of $\textrm{Hol}(\nabla) \subseteq SU(m)$ is that the first Chern
Class of our Manifold vanishes. We now state a lemma which combined with the
proof of the Calabi-Conjecture will prove the existence of Calabi-Yau Manifolds,
\begin{lemma} Let $(M,J,g)$ be a K\"{a}hler Manifold. Then
${Hol}^{0}(\nabla) \subseteq SU(m) \iff $ g is Ricci-flat.
\end{lemma}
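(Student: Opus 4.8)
The plan is to reduce the statement to an infinitesimal (Lie-algebra) assertion about curvature, and then to recognize the relevant piece of the curvature as the Ricci form. The starting point is the identity $\nabla J = 0$, which holds on any K\"ahler manifold (Theorem~7): since parallel transport then commutes with $J$ and preserves $g$, the restricted holonomy group $\textrm{Hol}^{0}(\nabla)$ is a \emph{connected} Lie subgroup of the subgroup of $SO(2m)$ commuting with $J$, i.e.\ of $U(m)$. Under the direct sum decomposition $\mathfrak{u}(m) = \mathfrak{su}(m) \oplus \mathbb{R}\cdot(iI)$, with the second summand the trace part, a connected subgroup of $U(m)$ lies in $SU(m)$ if and only if its Lie algebra lies in $\mathfrak{su}(m)$, i.e.\ if and only if every element of $\mathfrak{hol}^{0}_{x}$ is trace-free as a complex endomorphism of $T_{x}^{1,0}M$. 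This uses only that a connected Lie group is determined by its Lie algebra.

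Next I would invoke the Ambrose--Singer holonomy theorem: $\mathfrak{hol}^{0}_{x}$ is the linear span of the endomorphisms $P_{\gamma}^{-1}\, R(X,Y)\, P_{\gamma}$, where $R$ is the Riemann curvature of $\nabla$, the vectors $X,Y$ range over tangent vectors along $\gamma$, and $\gamma$ ranges over paths issuing from $x$. Because the complex trace is invariant under conjugation, $\mathfrak{hol}^{0}_{x} \subseteq \mathfrak{su}(m)$ if and only if $\mathrm{tr}_{\mathbb{C}}\, R(X,Y) = 0$ for all $X,Y$, i.e.\ if and only if the trace of the curvature, viewed as a two-form valued in endomorphisms of $T^{1,0}M$, vanishes identically. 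The technical heart of the argument --- and the step I expect to be the main obstacle --- is the identification of this trace with the Ricci form: using the K\"ahler identities of Section~1, in holomorphic coordinates the trace over the fibre indices of the curvature of the Levi--Civita connection on $T^{1,0}M$ collapses, via $\Gamma_{ij}{}^{j} = \partial_{i}\log\sqrt{\det g}$, to $\pm\,\partial_{i}\partial_{\bar j}\log\det g$, which is exactly $R_{i\bar j}$ up to the sign and factor conventions fixed earlier. Hence $\mathrm{tr}_{\mathbb{C}}\, R$ is, up to a fixed nonzero constant, the Ricci form $\rho = i R_{i\bar j}\, dz^{i}\wedge d\bar z^{j}$.

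Given this identity the equivalence falls out in both directions. If $g$ is Ricci-flat then $\rho \equiv 0$, so every curvature endomorphism $R(X,Y)$ is trace-free, hence $\mathfrak{hol}^{0}_{x} \subseteq \mathfrak{su}(m)$, and by connectedness $\textrm{Hol}^{0}(\nabla) \subseteq SU(m)$. Conversely, if $\textrm{Hol}^{0}(\nabla) \subseteq SU(m)$ then $\mathfrak{hol}^{0}_{x} \subseteq \mathfrak{su}(m)$, so each $R(X,Y)$ has vanishing complex trace; by the identity above $\rho \equiv 0$, i.e.\ $R_{i\bar j} \equiv 0$, so $g$ is Ricci-flat.

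Finally, I would record a cleaner but essentially equivalent route that sidesteps the index computation: the determinant homomorphism $\det\colon U(m)\to U(1)$ pushes the holonomy of $\nabla$ forward to the holonomy of the induced connection on the anticanonical line bundle $K_{M}^{-1} = \Lambda^{m}T^{1,0}M$, whose curvature two-form is precisely the Ricci form $\rho$ (this is the content of Chern's computation recalled in Section~2). Since the restricted holonomy of a connection on a line bundle is trivial exactly when its curvature vanishes identically, and $SU(m) = \ker(\det)$, we obtain $\textrm{Hol}^{0}(\nabla) \subseteq SU(m) \iff \rho \equiv 0 \iff g \text{ is Ricci-flat}$. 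I would present the first argument in full and mention the second only as a remark.
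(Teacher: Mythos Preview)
Your argument is correct and is the standard proof of this fact: Ambrose--Singer identifies $\mathfrak{hol}^{0}$ with the span of curvature operators, and on a K\"ahler manifold the complex trace of $R(X,Y)$ on $T^{1,0}$ is (up to a constant) the Ricci form, so the trace-free condition $\mathfrak{hol}^{0}\subseteq\mathfrak{su}(m)$ is equivalent to Ricci-flatness. The alternative route via the induced connection on $K_M^{-1}$ is equally valid and, as you say, essentially the same argument packaged differently.

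There is nothing to compare against, however: the paper states this lemma without proof. It is quoted only as input to the concluding remark that Yau's theorem, applied with $c_1(M)=0$, produces Ricci-flat metrics and hence metrics with $\textrm{Hol}^{0}(\nabla)\subseteq SU(m)$. So your write-up would in fact supply something the paper omits. One small point worth tightening: in the line-bundle version you assert that the restricted holonomy of a line-bundle connection is trivial exactly when its curvature vanishes. The forward direction is immediate from Ambrose--Singer (or from $\mathrm{hol}=\exp\!\int F$ over small disks); for the converse you are implicitly using that $\textrm{Hol}^{0}$ is connected, so that a trivial Lie algebra forces a trivial group. You already make this connectedness point explicitly in the first argument, so just carry it over.
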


Assuming that the first Chern Class of our Manifold vanishes, by the Calabi
Conjecture we can find a unique metric in each K\"{a}hler Class with vanishing
Ricci Form. Hence $\textrm{Hol}(\nabla) \subseteq SU(m)$ and we can prove the
existence of Ricci-flat metrics.

\end{document}